\newcommand{\R}{\mathbb{R}}
\newcommand{\E}{\mathbb{E}}
\newcommand{\N}{\mathbb{N}}
\newcommand{\C}{\mathbb{C}}
\newcommand{\LCM}{{\rm lcm}}
\newcommand{\GCD}{{\rm gcd}}
\renewcommand{\P}{\mathbb{P}}
\newcommand{\1}{\mathbbm{1}}
\renewcommand {\leq}{\leqslant}
\renewcommand {\geq}{\geqslant}
\renewcommand{\Re}{\operatorname{Re}}
\newcommand{\todistr}{\overset{d}{\underset{n\to\infty}\longrightarrow}}
\newcommand{\toasm}{\overset{a.s.}{\underset{m\to\infty}\longrightarrow}}
\theoremstyle{plain}
\newtheorem{theorem}{Theorem}[section]
\newtheorem{lemma}[theorem]{Lemma}
\newtheorem{corollary}[theorem]{Corollary}
\newtheorem{proposition}[theorem]{Proposition}
\theoremstyle{definition}
\theoremstyle{remark}
\newtheorem{remark}[theorem]{Remark}
\begin{document}

\title[On the least common multiple of several random~integers]{On the least common multiple \\ of several random~integers}

\author{Alin Bostan}
\address{Alin Bostan, Inria, Universit\'e Paris-Saclay, 1 rue Honor\'e d'Estienne d'Orves, 91120 Palaiseau, France}
\email{alin.bostan@inria.fr}

\author{Alexander Marynych}
\address{Alexander Marynych, Faculty of Computer Science and Cybernetics, Taras Shev\-chen\-ko National University of Kyiv, 01601 Kyiv, Ukraine}
\email{marynych@unicyb.kiev.ua}

\author{Kilian Raschel}
\address{Kilian Raschel, CNRS \& Institut Denis Poisson, Universit\'{e} de Tours and Universit\'e d'Orl\'eans, 37200 Tours, France}\email{raschel@math.cnrs.fr}
\thanks{This project has received funding from the European Research Council (ERC) under the European Union's Horizon 2020 research and innovation programme under the Grant Agreement No 759702.}

\begin{abstract}
Let $L_n(k)$ denote the least common multiple of $k$ independent random
integers uniformly chosen in $\{1,2,\ldots ,n\}$. In this note, using a
purely probabilistic approach, we derive a criterion for the convergence in
distribution as $n\to\infty$ of $\frac{f(L_n(k))}{n^{rk}}$ for a wide class
of multiplicative arithmetic functions~$f$ with polynomial
growth $r>-1$. Furthermore, we identify the limit as an infinite
product of independent random variables indexed by prime numbers.
Along the way, we compute the generating function of a trimmed sum of
independent geometric laws, occurring in the above infinite product. This
generating function is rational; we relate it to the generating function of a
certain max-type Diophantine equation, of which we solve a generalized
version. Our results extend theorems by Erd\H{o}s and Wintner (1939),
Fern\'{a}ndez and Fern\'{a}ndez (2013) and Hilberdink and T\'{o}th
(2016).
\end{abstract}

\keywords{Convergence in distribution, least common multiple, prime products, trimmed sums of geometric laws}

\subjclass[2010]{Primary: 11A05, 11N37;  Secondary: 11A25, 60F05}

\maketitle

\section{Introduction}



A celebrated result due to Dirichlet~\cite{Dirichlet:1849} states that
two random positive integers are coprime with probability~$6/\pi^2 \approx
0.61$. A heuristic argument goes as follows. A prime~$p$ divides a random
integer $X$ with probability $1/p$, and does not divide independent $X_1$ and $X_2$ simultaneously with probability $1 - 1/p^2$. Hence the event $\GCD(X_1, X_2) = 1$ occurs with probability
\[\prod_{p \in \mathcal{P}} \left(1 - \frac{1}{p^2} \right) = \left(
\sum_{n\geq 1} \frac{1}{n^2}\right)^{-1} = \frac{6}{\pi^2},\] 
where $\mathcal{P}=\{2, 3, 5, \ldots\}$ denotes the set of prime numbers. An
equivalent restatement is that two random positive integers admit an expected
number of $\pi^2/6\approx 1.64$ common positive integer divisors, or that the
expected number of integers between $1$ and~$N$ which are coprime with~$N$
equals $6N/\pi^2 \approx 0.61 N$. More generally, Ces\`aro
showed~\cite{Cesaro:1884,Cesaro:1885} that for $k\geq 2$ positive random
integers, the probability that they are relatively prime is $1/\zeta(k)$,
where $\zeta(s) = \sum_{n \geq 1} n^{-s}$ is the Riemann zeta function. For a
nice account of the rich history of Dirichlet's result, see~\cite{AbNi:2017}.


As stated, these facts are however not very precise, since there is no
uniform distribution on the set of positive integers. What we have implicitly
considered above is the uniform distribution on $\{1,2,\ldots, n\}$ and then
we have taken the limit as~$n$ goes to infinity. Formally, 
if $P_k(n)$ denotes the probability that $k\geq 2$ positive integers, chosen uniformly at random 
from $\{1,2,\ldots, n\}$, are relatively prime, i.e.
$$
P_k(n)=\frac{1}{n^k}\#\{(n_1,n_2,\ldots,n_k)\in\mathbb{N}^k:n_1,\ldots,n_k\leq n, \GCD(n_1,n_2,\ldots,n_k)=1\},
$$
then
$$
\lim_{n\to\infty}P_k(n) =1/\zeta(k).
$$
Moreover, the following estimates for the rate of convergence are known $P_k(n)=1/\zeta(k) + O(1/n)$ for $k\geq 3$, and $P_2(n)=1/\zeta(2) + O(\log n/n)$, see e.g.~\cite{Nymann:1972} and~\cite{DiEr:2004}. Further refinements of these celebrated results can be found in the recent papers \cite{Ferraguti+Micheli:2016,Mehrdad+Zhu:2016,Micheli+Schnyder:2016}.

Ces\`aro also considered similar questions when the greatest common
divisor (gcd) is replaced by the least common multiple (lcm). He proved
in~\cite{Cesaro:1885b} that the expected lcm of two random integers is asymptotically equal
to their product multiplied by the constant $\zeta(3)/\zeta(2)\approx 0.73$,
and more generally that if $X_1^{(n)}$ and $X_2^{(n)}$ are independent copies of a
random variable with the uniform distribution on $\{1,2,\ldots,n\}$, then the
moments $\E\{\LCM (X_1^{(n)},X_2^{(n)})^r\}$ of their least common multiple behave
like 
\begin{multline*}
\E\{\LCM (X_1^{(n)},X_2^{(n)})^r\}~\sim~\zeta(r+2)/\zeta(2) \cdot (\E (X_1^{(n)})^r)^2\\
~\sim~\frac{\zeta(r+2)}{\zeta(2)(r+1)^2} \cdot n^{2r},\quad n\to\infty.
\end{multline*} In contrast with the case of the gcd, the extension of this result
to the lcm of several random integers is much more subtle. This is the topic
of the current note.

Let thus $X_1^{(n)},X_2^{(n)},\ldots,X_k^{(n)}$ be independent copies
of a random variable $X^{(n)}$ with the uniform distribution on
$\{1,2,\ldots,n\}$. In what follows, we are interested in asymptotic
properties of the distribution of the least common multiple
\begin{equation*}
     L_n(k)=\LCM\left(X_1^{(n)},X_2^{(n)},\ldots,X_k^{(n)}\right),
\end{equation*}
as $n\to\infty$, and more generally of the quantity $f(L_n(k))$, for a wide class
of multiplicative arithmetic functions $f: \N \to \C$, with $\N$ denoting
$\{1,2,3,\ldots \}$. Recall that a function $f$ is said to be \emph{arithmetic} 
if its domain of definition is $\mathbb{N}$ and its range 
is $\mathbb{C}$. An arithmetic function is called
\emph{multiplicative} if $f(1)=1$ and if $f(mn)=f(m)f(n)$ as soon as
$m$ and $n$ are coprime. 

Our motivation for the present paper comes from two recent works, one by Fern\'{a}ndez and
Fern\'{a}ndez~\cite{Fernandez+Fernandez:2013} and the other by Hilberdink and
T\'{o}th~\cite{Hilberdink+Toth:2016}.

In 2013 Fern\'{a}ndez and Fern\'{a}ndez proved, see~Theorem~3(b) in~\cite{Fernandez+Fernandez:2013}, a generalization of Ces\`aro's
result for the lcm of three random integers. More precisely, they showed that
the moments $\E\{(L_n(3))^r\}$ behave asymptotically like $\frac{C_{r,3}}{(r+1)^3} \cdot n^{3r}$ as $n$ tends to infinity for every fixed $r\in\N$. Here, the
constant $C_{r,3}$ is equal (in the notation
of~\cite{Fernandez+Fernandez:2013}) to $C_{r,3} = T_3\zeta(2r+3)J(r+2)$, where
$T_3 = \prod_{p \in \mathcal{P}} (1-1/p)^2 (1+2/p)$ is the asymptotic
proportion of triples of integers that are pairwise coprime, and where
$J(r+2)$ is the Dirichlet series ${J(r+2) = \prod_{p \in \mathcal{P}} \left(1
+ \frac{3(p+1)}{(p+2)(p^{r+2}-1)}\right)}$. An easy computation shows that the
constant $C_{r,3}$ admits the equivalent expression
\begin{equation}\label{eq:c3r} 
     C_{r,3} =
\zeta(r+2)\zeta(2r+3)\prod_{p\in\mathcal{P}}\left(1-\frac{1}{p}\right)^2\left(1+\frac{2}{p}+\frac{2}{p^{r+2}}+\frac{1}{p^{r+3}}\right).
\end{equation}
In particular the expected lcm of three random positive integers is asymptotically equal to
their product multiplied by the constant $C_{1,3} \approx 0.34$. The method
used by Fern\'{a}ndez and Fern\'{a}ndez~\cite[\S4]{Fernandez+Fernandez:2013}
relies on probabilistic arguments combined with the classical identity
\begin{equation*}
     \LCM(X_1,X_2,X_3) = \frac{X_1X_2X_3 \GCD(X_1,X_2,X_3)}{\GCD(X_1, X_2) \GCD(X_2,X_3) \GCD(X_3,X_1)}.
\end{equation*}
Although this identity does admit a generalization for $k>3$ integers, the
probabilistic arguments used in \cite{Fernandez+Fernandez:2013} do not seem to extend smoothly to the case $k>3$.

Instead of that, for arbitrary $k\in\N$, Fern\'{a}ndez and Fern\'{a}ndez
provide in Theorem~1 in \cite{Fernandez+Fernandez:2013} upper (resp.\ lower) bounds
for the upper (resp.\ lower) limit of the probability $\P\{L_n(k)\leq x n^k\}$,
$x\in(0,1)$, but these upper and lower bounds are different. Only for $k=2$
and $k=3$ these bounds imply that the sequence $(\E
\{(L_n(k)/n^k)^{r}\})_{n\in\N}$ actually converges to a nondegenerate limit,
which is $(r+1)^{-2}\zeta(r+2)/\zeta(2)$ when $k=2$ and the aforementioned constant
$(r+1)^{-3}C_{r,3}$ when $k=3$.

It is natural to ask whether such a convergence result also holds for $k>3$.
The \emph{positive} answer to this question is implicit in the work of
Hilberdink and T\'{o}th~\cite{Hilberdink+Toth:2016}, see Theorem 2.1 therein.
Generalizing both the results of Ces\`aro~\cite{Cesaro:1885b} (for
$k=2$) and Fern\'{a}ndez and Fern\'{a}ndez~\cite{Fernandez+Fernandez:2013}
(for $k=3$), they managed to prove that for any $k\geq 2$ and $r\in\N$, the moments $\E \{(L_n(k))^r\}$ behave asymptotically like $(r+1)^{-k}C_{r,k} \cdot n^{kr}$ as $n$ tends to infinity, where the constant
$C_{r,k}$ is equal to 
\begin{equation}\label{eq:ckr}
	C_{r,k} = \prod_{p \in \mathcal{P}} \left( 1 -
\frac{1}{p}\right)^k \sum_{\ell_1, \ldots, \ell_k=0}^{\infty}
\frac{p^{r \, \max(\ell_1, \ldots, \ell_k)}}{p^{(r+1)(\ell_1 + \cdots + \ell_k)}}.
\end{equation} 
Hilberdink and T\'{o}th also proved, see Corollary~1 in \cite{Hilberdink+Toth:2016}, that
the $k$-variate sum above simplifies in the cases $k=2$, $k=3$ and $k=4$ to an
explicit rational function in $1/p$, allowing to retrieve the value $C_{r,2} =
\zeta(r+2)/\zeta(2)$ due to Ces\`aro, and the value $C_{r,3}$ in
Eq.~\eqref{eq:c3r} due to Fern\'{a}ndez and Fern\'{a}ndez.
The method used by Hilberdink and T\'{o}th for $k\in\{2,3,4\}$ is effective
and could yield an algorithm that computes (in principle) a formula similar to~\eqref{eq:c3r} for any
given~$k$. However, the algorithm has complexity exponential in~$k$,
so in practice it yields formulas for few values of~$k$.

One of the byproducts of the present work is that we further simplify the
expression of $C_{r,k}$ in~\eqref{eq:ckr}. Precisely, we prove, see
Corollary~\ref{prop:r_k_prop} below, that
\[ C_{r,k} =  \prod_{p\in\mathcal{P}} F_{r,k} \left( \frac{1}{p}\right),\quad k,r\in\N.\]
where $F_{r,k}(x)$ is the following \emph{explicit univariate} rational function:
\[
F_{r,k}(x) = 
\left(\frac{1-x}{1-{x^{r+1}}}\right)^{k} \cdot \sum_{j=1}^{k}\binom{k}{j}(-1)^{j-1}\frac{1-{x^{j(r+1)}}}{1-{x^{(j-1)(r+1)+1}}},\quad k,r\in\N,\quad |x|<1.
\]
The fact that the term in the product defining $C_{r,k}$
in~\eqref{eq:ckr} is a rational function in $1/p$ is not surprising. This
follows from the fact that if $\alpha_{r,k,\ell}$ denotes the number of
solutions in $\N_0^k$, where $\N_0 = \N \cup \{ 0 \}$, of the max-type {linear Diophantine} equation
\begin{equation}\label{eq:eq_integers}
     (r+1)(\ell_1+\cdots+\ell_k)-r \, \max(\ell_1, \ldots, \ell_k)=\ell,\quad k,r\in\N, \quad \ell\in\N_0,
\end{equation}
then a classical result due to Ehrhart~\cite{Ehrhart:1967} implies that the
generating functions
\begin{equation*}
     x\mapsto \sum_{\ell=0}^{\infty}\alpha_{r,k,\ell}x^{\ell}
\end{equation*}
are rational. Indeed, one can split the orthant $\N_0^k$
into wedges $x_{\sigma(1)}\geq \cdots \geq x_{\sigma(k)}$, where $\sigma$ is a
permutation of $\{1,\ldots,k\}$, get a rational generating function on each
wedge by~\cite{Ehrhart:1967}, and use inclusion-exclusion to take care of the
boundaries where the regions intersect. What is more interesting in our case
is that we get an \emph{explicit generating function}. Details are given in the Appendix.



A trivial consequence of Theorem 2.1 in \cite{Hilberdink+Toth:2016} is that for every $r\in\N$, the sequence of moments $(\E \{(L_n(k)/n^k)^{r}\})_{n\in\N}$ converges to the constant
$(r+1)^{-k}C_{r,k}$ as $n\to\infty$, whence, by the classical method of moments (see
e.g.\ Example (d) on page 251 in \cite{Feller:1971})
the following convergence in distribution holds
\begin{equation}\label{eq:conv_in_distr_l_n}
     \frac{L_n(k)}{n^k}\todistr Y_{\infty,k},
\end{equation}
where $Y_{\infty,k}$ is a random variable with values in $[0,1]$ such that $\E Y^{r}_{\infty,k}=(r+1)^{-k}C_{r,k}$ for all $r\in\N$. Conversely, since the sequence $(\frac{L_n(k)}{n^k})_{n\in\mathbb{N}}$ is uniformly bounded by~$1$, the convergence in distribution \eqref{eq:conv_in_distr_l_n} yields the convergence of the moments, and thereby a particular case of Theorem~2.1 in \cite{Hilberdink+Toth:2016} when restricted to power functions $f(n)=n^r$. The aforementioned Theorem~2.1 in \cite{Hilberdink+Toth:2016} provides general conditions on a multiplicative function $f$ of a polynomial growth $r>-1$ that ensure the convergence of moments 
\begin{equation}
\label{eq:moments}
     \E\left\{\frac{f(L_n(k))}{n^{rk}}\right\},
\end{equation}
as $n\to\infty$, to a finite positive limit. The approach used in \cite{Hilberdink+Toth:2016} to derive convergence of \eqref{eq:moments} is purely analytical. Even in the simple case  \eqref{eq:conv_in_distr_l_n} it  does not shed light on the probabilistic mechanisms behind this convergence, nor on the probabilistic structure of the limit~$Y_{\infty,k}$. Moreover, in general it does not provide a distributional convergence of
\begin{equation}
\label{eq:random_variable}
     \frac{f(L_n(k))}{n^{rk}},\quad k\in\N,
\end{equation}
as $n\to\infty$. The main contributions of the current note is a derivation of a criterion for the convergence in distribution of \eqref{eq:random_variable}, as $n\to\infty$, by using a purely probabilistic approach, see Theorem \ref{thm:main1} below. Furthermore, we manage to identify the limit of \eqref{eq:random_variable} as an infinite product of independent random variables indexed by the set of prime numbers $\mathcal P$. Further comparison of our main results and Theorem~2.1 in \cite{Hilberdink+Toth:2016} shall be given in Remark \ref{rem:comaprison} below.

As we shall see, our main result is very close in spirit to a well-known
result in probabilistic number theory, namely the celebrated
Erd\H{o}s--Wintner theorem, see for example \cite{ErWi:1939} or Theorem 3 in 
\cite{Galambos:1970}. Let us recall that the latter asserts that
if $X^{(n)}$ is a random variable with uniform distribution on
$\{1,2,\ldots,n\}$ and if~$f$ is an \emph{additive} arithmetic function, then
the sequence $(f(X^{(n)}))_{n\in\N}$ converges in distribution if and only
if the following three series converge for some $A>0$:
\begin{equation}
\label{eq:erdos-wintner_cond}
     \sum_{p\in\mathcal P,\,\vert f(p)\vert>A}\frac{1}{p},\qquad 
     \sum_{p\in\mathcal P,\,\vert f(p)\vert\leq A}\frac{f(p)}{p},\qquad 
     \sum_{p\in\mathcal P,\,\vert f(p)\vert\leq A}\frac{f^2(p)}{p}.
\end{equation}
Moreover, if the limit $X_{\infty}$ of $(f(X^{(n)}))_{n\in\N}$ exists, it 
necessarily satisfies
\begin{equation*}
     \E e^{it X_{\infty}}=\prod_{p\in\mathcal P}\left(1-\frac{1}{p}\right)\sum_{j=0}^{\infty}\frac{e^{it f(p^j)}}{p^{j}},\quad t\in\R,
\end{equation*}
and thus $X_{\infty}$ is a sum of independent random variables indexed by primes. The
underlying probabilistic result behind the Erd\H{o}s--Wintner result is
Kolmogorov's three series theorem, see Chap.~III.4 in \cite{Tenenbaum:1995}. 
Let us further point out that by Kolmogorov's three series theorem, the conditions~\eqref{eq:erdos-wintner_cond} are equivalent to the almost sure convergence of the series 
\begin{equation}
\label{eq:rep_X_infinity}
     \sum_{p\in\mathcal{P}}f\Bigl(p^{\mathcal{G}(p)}\Bigr),
\end{equation}
where $(\mathcal{G}(p))_{p\in\mathcal{P}}$ is a family of mutually independent geometric random variables, such that
\begin{equation}\label{eq:geometrics}
     \P\{\mathcal{G}(p)=k\}=\left(1-\frac{1}{p}\right)\frac{1}{p^k},\quad k\in\N_0=\{0,1,2,\ldots\}.
\end{equation}
Thus \eqref{eq:rep_X_infinity} is a representation of $f(X_{\infty})$, the limit of $(f(X^{(n)}))_{n\in\N}$ as $n\to\infty$. 

Let us finally mention some recent works related to the problem considered here. In two recent papers~\cite{AKM:2018,CillRueSarka:14} the authors analyze an asymptotic behavior of $\LCM(A_n)$, where $A_n$ is a random subset of $\{1,2,\ldots,n\}$ obtained by removing every element with a fixed probability $p\in(0,1)$. Since in this case the cardinality of $A_n$ increases linearly as $n\to\infty$, the model exhibits a completely different asymptotic behavior, see e.g.~Corollary~1.5 in \cite{AKM:2018}. Another related problem was 
addressed in \cite{Hu:2013,Toth:2016}, where it was proved that the set of $k$-tuples of positive integers such that any $m$ of them are relatively prime possesses an asymptotic density. Similarly to our results the explicit formula for this density involves product over $p\in\mathcal{P}$ of rational functions of $1/p$, see Eq.~(6) in \cite{Toth:2016}.

We close the introduction by setting up some notation. We shall
denote by $\lambda_p(n)$ the exponent of the prime number
$p\in\mathcal{P}$ in the prime factorization of $n\in\N$, that is
\begin{equation*}
     n=\prod_{p\in\mathcal{P}}p^{\lambda_p(n)}.
\end{equation*}
Note that $\lambda_p(n)$ is zero for all but finitely many $p\in\mathcal{P}$. We shall further ubiquitously use the family $(\mathcal{G}_j(p))_{j\in \{1,\ldots,k\},p\in\mathcal{P}}$ of mutually independent random variables such that $\mathcal{G}_j(p)$ is distributed like $\mathcal{G}(p)$ in \eqref{eq:geometrics} for every $j=1,2,\ldots,k$. 
Finally, given any $i\in\mathbb N$, we shall denote by $\vee_{j=1}^{i}a_k$ the maximum of real numbers $a_1,\ldots ,a_i$.

\section{Main results}
\label{sec:main_result}

Given a multiplicative function $f$ and $r\in\R$, define the infinite random product
\begin{equation}\label{eq:limit_definition}
     X_{f,\infty,k}:=\prod_{p\in\mathcal{P}}\frac{f(p^{\vee_{j=1}^{k}\mathcal{G}_j(p)})}{p^{r\sum_{j=1}^{k}\mathcal{G}_j(p)}}.
\end{equation}
We characterize the convergence of $X_{f,\infty,k}$ in Proposition~\ref{prop:limit_convergence} below. The denominators in the infinite product \eqref{eq:limit_definition} should be thought of as normalization factors. Note also that taking $f$ as the identity function and $r=1$, the quantity $X_{f,\infty,k}$ becomes
\begin{equation}\label{eq:r_k_def}
     R_k:=\prod_{p\in\mathcal{P}}p^{\vee_{j=1}^{k}\mathcal{G}_j(p)-\sum_{j=1}^{k}\mathcal{G}_j(p)}\in1/\mathbb N.
\end{equation}
The ordinary generating function of $R_k^{-1}$ and the moments of $R_k$ will be computed in Proposition~\ref{prop:z_k_p_prop} and Proposition~\ref{prop:r_k_prop}.

For $p\in\mathcal{P}$ and $r\in\R$, put 
\begin{equation}\label{eq:bpr_definition}
     B_{p,r}=\log \left(\frac{\vert f(p)\vert}{p^r}\right).
\end{equation}

\begin{proposition}\label{prop:limit_convergence}
The infinite product on the right-hand side of \eqref{eq:limit_definition} converges a.s.\ if and only if the following three assumptions are satisfied: for some $A>0$,
\begin{enumerate}[label={(\alph{*})},ref={(\alph{*})}]
     \item\label{it:1}the series $\sum_{p\in\mathcal{P}}\frac{\1_{\{\vert B_{p,r}\vert\geq A\}}}{p}$ converges;
     \item\label{it:2}the series $\sum_{p\in\mathcal{P}}\frac{B_{p,r}\1_{\{\vert B_{p,r}\vert\leq A\}}}{p}$ converges;
     \item\label{it:3}the series $\sum_{p\in\mathcal{P}}\frac{B^2_{p,r}\1_{\{\vert B_{p,r}\vert\leq A\}}}{p}$ converges.
\end{enumerate}
If moreover $\vert f(p)\vert \sim p^r$ as $p\to\infty$ along the prime numbers, then \ref{it:1} holds automatically, \ref{it:2} implies \ref{it:3}, and \ref{it:2} is equivalent to
\begin{enumerate}[label={(\alph{*})},ref={(\alph{*})}]
\setcounter{enumi}{3}
    \item\label{it:4}the series $\sum_{p\in\mathcal{P}}\frac{1}{p}\log\left(\frac{\vert f(p)\vert}{p^r}\right)$ converges.
\end{enumerate}
\end{proposition}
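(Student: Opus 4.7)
The plan is to reduce a.s.\ convergence of the infinite product to a.s.\ convergence of a sum of independent random variables, and then apply Kolmogorov's three-series theorem. Setting $M(p):=\vee_{j=1}^{k}\mathcal{G}_j(p)$, $S(p):=\sum_{j=1}^{k}\mathcal{G}_j(p)$, and $Y_p:=\log\bigl(f(p^{M(p)})/p^{rS(p)}\bigr)$, the partial products of $X_{f,\infty,k}$ equal $\exp\bigl(\sum_{p\le N}Y_p\bigr)$. Since the vectors $(\mathcal{G}_j(p))_{j=1}^{k}$ are independent for distinct primes $p$, the random variables $Y_p$ are independent, and a.s.\ convergence of the infinite product is equivalent to a.s.\ convergence of $\sum_p Y_p$. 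By the three-series theorem, the latter is equivalent, for some $A>0$, to the simultaneous convergence of $\sum_p\P\{|Y_p|>A\}$, $\sum_p\E[Y_p\1_{\{|Y_p|\le A\}}]$, and $\sum_p\mathrm{Var}(Y_p\1_{\{|Y_p|\le A\}})$ (applied componentwise to real and imaginary parts if $f$ is complex-valued).

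Next I would identify these three Kolmogorov series with those of the proposition. The joint law of $(\mathcal{G}_1(p),\ldots,\mathcal{G}_k(p))$ splits into three regimes: the all-zero vector, of probability $(1-1/p)^k = 1-k/p+O(1/p^2)$ and contributing $Y_p=0$; the $k$ configurations with exactly one coordinate equal to $1$ and the rest zero, of total probability $k(1-1/p)^{k-1}/p = k/p+O(1/p^2)$ and contributing $Y_p=\log(f(p)/p^r)$, so that $\Re Y_p = B_{p,r}$; and all remaining configurations (some coordinate at least $2$, or at least two coordinates nonzero), of total probability $O(1/p^2)$. Since $\sum_p p^{-2}<\infty$ and $f$ has polynomial growth of order $r$, the third regime contributes absolutely summable errors to each of the three Kolmogorov series, while the first two regimes match, up to the multiplicative constant $k$, exactly the series in~\ref{it:1}, \ref{it:2}, \ref{it:3}. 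This yields the announced equivalence.

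For the moreover part, $|f(p)|\sim p^r$ forces $B_{p,r}\to 0$; hence $\1_{\{|B_{p,r}|\ge A\}}$ vanishes on all but finitely many primes, giving~\ref{it:1} for free, and $\1_{\{|B_{p,r}|\le A\}}$ eventually equals $1$, so the series in~\ref{it:2} and in~\ref{it:4} differ only by finitely many terms and the two conditions are equivalent. The implication~\ref{it:2}$\Rightarrow$\ref{it:3} should follow from $B_{p,r}^2\le A|B_{p,r}|$ for large $p$ combined with a refined analysis of the truncated moments of $Y_p$. I anticipate this last implication to be the main obstacle, since conditional convergence of $\sum_p B_{p,r}/p$ does not in isolation entail convergence of $\sum_p B_{p,r}^2/p$; the argument must exploit the specific probabilistic structure of $Y_p$ under $|f(p)|\sim p^r$, most likely by showing that the variance series in the three-series theorem is controlled, on the dominant one-geometric-equals-one event, by the mean series rather than merely by the series of absolute values.
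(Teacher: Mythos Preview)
Your approach to the main equivalence is essentially the paper's: both pass to logarithms, isolate the event $\{\sum_{j=1}^k\mathcal{G}_j(p)=1\}$ as the dominant contribution, and then invoke Kolmogorov's three-series theorem. The only difference is how the remaining event $\{\sum_{j=1}^k\mathcal{G}_j(p)\ge 2\}$ is disposed of. The paper uses Borel--Cantelli (its total probability over primes is at most $k^2\sum_p p^{-2}<\infty$, so it occurs for only finitely many~$p$ a.s.\ and those terms can simply be dropped from the series), whereas you propose to carry the $O(p^{-2})$ errors through each of the three Kolmogorov series. Both work, but note that your appeal to ``$f$ has polynomial growth of order~$r$'' is unwarranted---no such hypothesis is present in the proposition---and also unnecessary: the truncation at level~$A$ already caps the contribution of the rare event to the mean and variance series by $A\cdot O(p^{-2})$ and $A^2\cdot O(p^{-2})$, and its contribution to the tail series is at most $O(p^{-2})$ regardless of how large $f(p^m)$ may be. The Borel--Cantelli route is the cleaner way to make this point.

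For the ``moreover'' clause, the paper gives no argument at all (its proof ends immediately after the three-series step), so there is nothing to compare against. Your handling of~\ref{it:1} and of \ref{it:2}$\Leftrightarrow$\ref{it:4} under $|f(p)|\sim p^r$ is correct and routine. You are right to single out \ref{it:2}$\Rightarrow$\ref{it:3} as the sticking point, but your proposed rescue via ``the specific probabilistic structure of~$Y_p$'' cannot succeed: once the problem has been reduced to the three deterministic series in $B_{p,r}$, there is no residual randomness to exploit, and conditional convergence of $\sum_p B_{p,r}/p$ with $B_{p,r}\to 0$ genuinely does not force convergence of $\sum_p B_{p,r}^2/p$. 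For instance, taking $B_{p_n,r}=(-1)^n(\log\log p_n)^{-1/2}$ (which one can realise by a suitable multiplicative~$f$), the alternating series test gives~\ref{it:2}, while $\sum_p B_{p,r}^2/p\asymp\sum_n (n\log n\,\log\log n)^{-1}$ diverges. So this implication, as stated, would require an additional hypothesis or a different argument; your instinct that something is missing here is correct.
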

\begin{remark}\label{rem:comaprison2}
Note that the assumption (i) in \cite{Hilberdink+Toth:2016} implies $\vert f(p)\vert \sim p^r$ as $p\to\infty$ along the prime numbers, as well as \ref{it:4}. 
\end{remark}
In order to illustrate how demanding item \ref{it:4} above is, let us recall the most classical result on the Bertrand-type series: 
$$
\sum_{p\in\mathcal{P}}\frac{1}{p\log^{1-\varepsilon}p}<\infty\quad\Longleftrightarrow \quad \varepsilon\leq 0.
$$

The proof of Proposition~\ref{prop:limit_convergence}, as well as proofs of
all results from this section, are postponed to Section~\ref{sec:proofs}. With Proposition~\ref{prop:limit_convergence} at hand, we can formulate our main result.

\begin{theorem}\label{thm:main1}
Assume that $f$ is a multiplicative arithmetic function and that $r\in\R$.
The following statements are equivalent:
\begin{enumerate}[label={(\roman{*})},ref={(\roman{*})}]
     \item\label{it-main:1}the infinite product \eqref{eq:limit_definition} defining $X_{f,\infty,k}$ converges~a.s.;
     \item\label{it-main:2}the conditions \ref{it:1}, \ref{it:2} and \ref{it:3} of Proposition~\ref{prop:limit_convergence} are satisfied;
     \item\label{it-main:3}$X_{f,\infty,k}$ converges~a.s.~and the following convergence in distribution holds
\begin{equation}\label{eq:thm1_conv1}
     \frac{f(L_n(k))}{\bigl(X_1^{(n)}X_2^{(n)}\cdots X_k^{(n)}\bigr)^{r}}\todistr X_{f,\infty,k};
\end{equation}
     \item\label{it-main:4}$X_{f,\infty,k}$ converges~a.s.~and
\begin{equation}\label{eq:thm1_conv2}
     \frac{f(L_n(k))}{n^{rk}}\todistr X_{f,\infty,k}\prod_{j=1}^{k}U^r_j,
\end{equation}
where $(U_j)_{j=1,\ldots,k}$ are independent copies of a random variable $U$ with the uniform distribution on $[0,1]$, and $(U_j)_{j=1,\ldots,k}$ are also independent of $X_{f,\infty,k}$.
\end{enumerate}
\end{theorem}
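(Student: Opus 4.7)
The equivalence (i) $\Leftrightarrow$ (ii) is a direct restatement of Proposition \ref{prop:limit_convergence}, while both (iii) $\Rightarrow$ (i) and (iv) $\Rightarrow$ (i) are trivial, since the a.s.\ convergence of $X_{f,\infty,k}$ is embedded in the hypotheses of (iii) and (iv). Therefore the genuine work lies in the implications (ii) $\Rightarrow$ (iii) and (iii) $\Rightarrow$ (iv), which together close the circle (ii) $\Rightarrow$ (iii) $\Rightarrow$ (iv) $\Rightarrow$ (i) $\Leftrightarrow$ (ii).

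For (ii) $\Rightarrow$ (iii), I would start from the prime-factorization identity
\begin{equation*}
\frac{f(L_n(k))}{\bigl(X_1^{(n)}\cdots X_k^{(n)}\bigr)^{r}} = \prod_{p \leq n} \frac{f\bigl(p^{\vee_{j=1}^{k}\lambda_p(X_j^{(n)})}\bigr)}{p^{r\sum_{j=1}^{k}\lambda_p(X_j^{(n)})}},
\end{equation*}
which follows from multiplicativity of $f$, from $\lambda_p(L_n(k)) = \vee_{j=1}^{k}\lambda_p(X_j^{(n)})$, and from the fact that each factor with $p>n$ equals $1$. The argument is then a standard truncate-and-pass-to-the-limit. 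For a fixed threshold $N$, classical counting shows that the joint law of $(\lambda_p(X_j^{(n)}))_{p\leq N,\,1\leq j\leq k}$ converges, as $n\to\infty$, to the product of independent geometric laws $(\mathcal{G}_j(p))_{p\leq N,\,1\leq j\leq k}$ of \eqref{eq:geometrics}; hence by the continuous mapping theorem the partial product over $p\leq N$ converges in distribution to $\prod_{p\leq N}\frac{f(p^{\vee_{j}\mathcal{G}_j(p)})}{p^{r\sum_{j}\mathcal{G}_j(p)}}$, whose limit as $N\to\infty$ is $X_{f,\infty,k}$ by (i). The remaining task is a uniform-in-$n$ tail estimate showing that $\prod_{N<p\leq n}$ is close to $1$ in probability as $N\to\infty$. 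Taking logarithms (and separately treating modulus and argument of $f$ if needed), the three-series-theorem arguments underlying Proposition \ref{prop:limit_convergence} provide summable truncated first and second moments in the geometric model; the deterministic bound $\mathbb{P}(\lambda_p(X^{(n)})\geq a) = \lfloor n/p^a\rfloor/n \leq 1/p^a$ transfers these moment estimates to the integer model with constants independent of $n$, yielding the required tail bound via Chebyshev.

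The implication (iii) $\Rightarrow$ (iv) follows from the factorization
\begin{equation*}
\frac{f(L_n(k))}{n^{rk}} = \frac{f(L_n(k))}{\bigl(X_1^{(n)}\cdots X_k^{(n)}\bigr)^{r}}\cdot \prod_{j=1}^{k}\left(\frac{X_j^{(n)}}{n}\right)^{r},
\end{equation*}
combined with the joint convergence of its two factors to $(X_{f,\infty,k},\prod_j U_j^r)$, with the $X_{f,\infty,k}$ block independent of the $U_j$'s. The asymptotic independence is obtained by truncating: the first factor is well approximated, thanks to the tail bound above, by a function of $(\lambda_p(X_j^{(n)}))_{p\leq N,\,j\leq k}$; the second factor depends only on the fractional sizes $X_j^{(n)}/n$; and conditionally on the small-prime profile of $X^{(n)}$, the rescaled variable $X^{(n)}/n$ remains asymptotically uniform on $[0,1]$ because the admissible integers form a sub-progression of $\{1,\ldots,n\}$ whose normalized counting measure converges to Lebesgue measure. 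The main difficulty, as in most probabilistic-number-theoretic arguments of this type, is the uniform-in-$n$ tail control over primes close to $n$, where the integer model most strongly deviates from the geometric model; this forces a careful coupling of the summability provided by (ii) with the bound $\mathbb{P}(p\mid X^{(n)})\leq 1/p$ to make the Chebyshev-type estimate work with constants not depending on $n$.
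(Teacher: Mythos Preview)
Your proposal is correct and follows essentially the same route as the paper: the equivalence (i)$\Leftrightarrow$(ii) is Proposition~\ref{prop:limit_convergence}, the implication (i)$\Rightarrow$(iii) is proved by truncating the prime product at a level $m$, using the convergence of $(\lambda_p(X_j^{(n)}))_{p\le m}$ to independent geometrics for the finite part, and controlling the tail $\prod_{p>m}$ via the three-series conditions together with the bound $\P\{\lambda_p(X^{(n)})\ge a\}\le p^{-a}$; and (iii)$\Rightarrow$(iv) comes from the joint convergence of $\bigl(X_j^{(n)}/n,(\lambda_p(X_j^{(n)}))_p\bigr)$ to $(U_j,(\mathcal G_j(p))_p)$ with $U_j$ independent of the geometric family. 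The only technical point your sketch leaves implicit is that the paper splits the tail primes into $(m,\sqrt n\,]$ and $(\sqrt n,n]$, the first range to exploit the asymptotic independence of $\lambda_p$ and $\lambda_q$ when $pq\le n$ in the second-moment cross terms, and the second range to use disjointness of the events $\{\lambda_p(X_1^{(n)})=1\}$; but this is exactly the ``uniform-in-$n$ tail control over primes close to $n$'' that you flag as the main difficulty.
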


\begin{remark}
The identity function $f(n)=n$ obviously satisfies assumptions \ref{it:1}, \ref{it:2} and \ref{it:3} with $r=1$, thus with our notation \eqref{eq:r_k_def},
\begin{equation}\label{eq:thm1_conv3}
     \frac{L_n(k)}{X_1^{(n)}X_2^{(n)}\cdots X_k^{(n)}}\todistr R_k,
\end{equation}
and
\begin{equation}\label{eq:thm1_conv4}
     \frac{L_n(k)}{n^k}\todistr R_k\prod_{j=1}^{k}U_j=Y_{\infty,k}.
\end{equation}
The quantity $R_k$ in \eqref{eq:r_k_def} is a.s.\ positive. As we have already mentioned in the introduction, both \eqref{eq:thm1_conv3} and \eqref{eq:thm1_conv4} follow from the results of \cite{Hilberdink+Toth:2016}. 
\end{remark}

\begin{remark}\label{rem:comaprison}
Let us now compare our Theorem \ref{thm:main1} with Theorem 2.1 in \cite{Hilberdink+Toth:2016} in more details. Whereas Hilberdink and T\'{o}th's main focus is placed on the convergence of the first moments \eqref{eq:moments} of the variables \eqref{eq:random_variable} (and actually of \emph{all} moments, because in \eqref{eq:moments} one may replace $f(L_n(k))/n^{rk}$ by $f(L_n(k))^q/n^{rkq}$), our Theorem \ref{thm:main1} provides much less restrictive conditions, see Remark \ref{rem:comaprison2} above, ensuring the convergence in distribution of \eqref{eq:random_variable}. Obviously, these results do overlap in some particular cases: convergence of moments can give convergence in distribution (e.g.\ if the method of moments applies, as for~\eqref{eq:conv_in_distr_l_n}); conversely, convergence in distribution may yield convergence of moments (for example, when the limit is compactly supported). But in general, they are of different nature. Furthermore, the limiting random variable~\eqref{eq:limit_definition}, being almost surely finite under assumptions \ref{it:1}, \ref{it:2} and \ref{it:3} in Proposition~\ref{prop:limit_convergence}, might have infinite power moments. Thereby in general we cannot expect convergence of the moments under \ref{it:1}, \ref{it:2} and \ref{it:3} alone. Another important observation is that we do not need any assumptions about the behavior of $f(p^q)$ for $q>1$ (condition (ii) in \cite{Hilberdink+Toth:2016}). Indeed, as we shall show in Section~\ref{sec:proofs}, powers of primes do not have impact in the~a.s.~convergence of the infinite product which defines $X_{f,\infty,k}$. Note that the same phenomenon occurs in the Erd\H{o}s--Wintner theorem, see conditions \eqref{eq:erdos-wintner_cond}. On the other hand, the behavior of $f(p^q)$ should impact the finiteness of power moments of $X_{f,\infty,k}$ explaining the appearance of condition (ii) in \cite{Hilberdink+Toth:2016}.
\end{remark}

Let us close Section~\ref{sec:main_result} by studying some properties of the random variable $R_k$ in~\eqref{eq:r_k_def}. Plainly, it is an infinite product of blocks along primes $p\in\mathcal P$, each of them being equal to $1/p$ raised to the power~$Z_k(p)$, where
\begin{equation}
\label{eq:sup-sum}
     Z_k(p):=\sum_{j=1}^{k}\mathcal{G}_j(p)-\vee_{j=1}^{k}\mathcal{G}_j(p).
\end{equation}
Besides the very particular case $k=2$, for which the latter reduces to $\mathcal{G}_1(p)\wedge\mathcal{G}_2(p)$ (and thus everything is known), the law of the random variable in \eqref{eq:sup-sum} is not trivial. Let us mention in passing that quantities
\begin{equation*}
     {}^{(r)}S_n=\xi_1+\xi_2+\cdots+\xi_n-\xi_n^{(1)}-\cdots-\xi_n^{(r)},
\end{equation*}
where $(\xi_k)_{k\in\N}$ are iid random variables and $\xi_n^{(n)}\leq\cdots\leq \xi_n^{(2)}\leq \xi_n^{(1)}$ is their arrangement in nondecreasing order, are called {\it trimmed sums}, see for instance \cite{CsSi:1995}. However, we have not been able to locate in the vast body of literature on trimmed sums any results about the exact distribution of $Z_k(p)$.

\begin{proposition}
\label{prop:z_k_p_prop}
Let $k\in\N$ and  $p\in\mathcal{P}$. The ordinary generating function of $Z_k(p)$ is rational and is given for $\vert t\vert \leq p$ by
\begin{equation*}
     \E t^{Z_k(p)}=\left(1-\frac{1}{p}\right)^{k}\left(1-\frac{t}{p}\right)^{-k}\sum_{j=1}^{k}\binom{k}{j}(-1)^{j-1}
\frac{1-\left(\frac{t}{p}\right)^j}{1-\frac{t^{j-1}}{p^j}}. 
\end{equation*}
In particular, one has $\E t^{Z_1(p)}=1$, as well as
\begin{align*}
     \E t^{Z_2(p)}&=\E t^{\mathcal{G}_1(p)\wedge\mathcal{G}_2(p)}=\frac{1-\frac{1}{p^2}}{1-\frac{t}{p^2}},\\
     \E t^{Z_3(p)}&=\frac{\left(1-\frac{1}{p}\right)^2}{\left(1-\frac{t}{p^2}\right)\left(1-\frac{t^2}{p^3}\right)} \left(1+\frac{2}{p}+\frac{2t}{p^2}+\frac{t}{p^3}\right).
\end{align*}
\end{proposition}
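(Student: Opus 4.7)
The plan is to compute $\E t^{Z_k(p)}$ by peeling off the maximum via a telescoping identity on indicators and then exploiting mutual independence of the $\mathcal{G}_j(p)$ through the binomial theorem. Writing $q=1/p$ for brevity, I would first observe that
\[
\E t^{Z_k(p)}=\sum_{m=0}^{\infty}t^{-m}\,\E\bigl[t^{\sum_{j=1}^{k}\mathcal{G}_j(p)}\1_{\{\vee_{j=1}^{k}\mathcal{G}_j(p)=m\}}\bigr],
\]
and then use the decomposition $\1_{\{\vee_{j=1}^{k}\mathcal{G}_j(p)=m\}}=\prod_{j=1}^{k}\1_{\{\mathcal{G}_j(p)\le m\}}-\prod_{j=1}^{k}\1_{\{\mathcal{G}_j(p)\le m-1\}}$ (the second product understood as $0$ when $m=0$), together with independence of the $\mathcal{G}_j(p)$, to rewrite each inner expectation as $\phi(m)^{k}-\phi(m-1)^{k}$, where
\[
\phi(m):=\E\bigl[t^{\mathcal{G}(p)}\1_{\{\mathcal{G}(p)\le m\}}\bigr]=(1-q)\,\frac{1-(qt)^{m+1}}{1-qt}
\]
is the truncated moment generating function of a single geometric factor.

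The second step is a binomial expansion of $\phi(m)^{k}$ and $\phi(m-1)^{k}$. Subtracting and regrouping by the exponent of $qt$ yields the key algebraic identity
\[
\phi(m)^{k}-\phi(m-1)^{k}=\Bigl(\tfrac{1-q}{1-qt}\Bigr)^{\!k}\sum_{j=1}^{k}\binom{k}{j}(-1)^{j-1}\bigl(1-(qt)^{j}\bigr)(qt)^{jm}.
\]
Injecting this into the first display and interchanging the two sums (justified by absolute convergence) produces, for each $j\in\{1,\dots,k\}$, an inner geometric series in $m$ of common ratio $q^{j}t^{j-1}$. For $|t|\le p$ one has $|q^{j}t^{j-1}|\le p^{-1}<1$ for every $j\ge 1$, so each such series equals $1/(1-q^{j}t^{j-1})$; reassembling the pieces gives exactly the announced rational expression for $\E t^{Z_k(p)}$. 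The special cases $k=1,2,3$ then follow by routine algebraic simplification of the sum over $j$.

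I expect the main obstacle to be the binomial-expansion identity for $\phi(m)^{k}-\phi(m-1)^{k}$ displayed above: while conceptually elementary, it requires careful bookkeeping of signs and of the shift $(qt)^{jm}\leftrightarrow(qt)^{j(m-1)}$ that produces the factor $1-(qt)^{j}$ essential for the subsequent geometric summation. All remaining steps, namely the telescoping rewriting of $\1_{\{\vee_{j=1}^{k}\mathcal{G}_j(p)=m\}}$, the interchange of summations, and the geometric-series evaluation, are standard manipulations whose only nontrivial input is the uniform bound $|q^{j}t^{j-1}|\le 1/p$ valid on the claimed disk $|t|\le p$.
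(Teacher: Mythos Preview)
Your proof is correct and lands on exactly the same intermediate expression as the paper, namely
\[
\E t^{Z_k(p)}=\sum_{m\ge 0}t^{-m}\bigl(\phi(m)^{k}-\phi(m-1)^{k}\bigr),
\]
after which both arguments finish identically via the binomial theorem and geometric summation in~$m$. The difference lies in how this intermediate expression is reached. The paper first isolates a purely combinatorial statement (Lemma~\ref{lem:compositions}) counting compositions of a given integer into $k$ parts with a prescribed maximum, proves it by coefficient extraction in a generating function, and then specializes. You bypass this lemma entirely: by writing $\1_{\{\vee_j\mathcal{G}_j(p)=m\}}$ as a telescoping difference of products and invoking independence of the $\mathcal{G}_j(p)$, the factorization $\E\bigl[t^{\sum_j\mathcal{G}_j(p)}\prod_j\1_{\{\mathcal{G}_j(p)\le m\}}\bigr]=\phi(m)^{k}$ is immediate. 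Your route is therefore slightly more streamlined for the purpose at hand, while the paper's route has the side benefit of extracting a stand-alone combinatorial identity that it later generalizes in the appendix on Diophantine equations. One small remark: your closed form for $\phi(m)$ and the displayed binomial identity for $\phi(m)^{k}-\phi(m-1)^{k}$ are singular at $t=p$ (where $qt=1$), so strictly speaking your computation runs for $|t|<p$ and the boundary value $|t|=p$ follows by continuity of the final rational expression; the paper glosses over this point as well.
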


Notice that the expression of $\E t^{Z_2(p)}$ above is clear, as $\mathcal{G}_1(p)\wedge\mathcal{G}_2(p)$ is distributed as $\mathcal{G}(p^2)$.

Using Proposition~\ref{prop:z_k_p_prop} we immediately obtain the following corollary generalizing formulas (11) and (12) in \cite{Hilberdink+Toth:2016}.

\begin{corollary}\label{prop:r_k_prop}
For $r\in\N_0$ we have
\begin{multline}\label{eq:moments_of_r_k}
     \E R_k^{r}= \E \prod_{p\in\mathcal{P}}p^{-rZ_k(p)}
     = \prod_{p\in\mathcal{P}}\frac{\left(1-\frac{1}{p}\right)^{k}}{\left(1-\frac{1}{p^{r+1}}\right)^{k}}\sum_{j=1}^{k}\binom{k}{j}(-1)^{j-1}\frac{1-\frac{1}{p^{j(r+1)}}}{1-\frac{1}{p^{(j-1)(r+1)+1}}}.
\end{multline}
In particular, using the Euler product of the Riemann zeta-function
\begin{equation*}
     \zeta(s)=\prod_{p\in\mathcal{P}}\left(1-\frac{1}{p^s}\right)^{-1},\quad \Re s>1,
\end{equation*}
we obtain $\E R_1^{r}=1$, as well as
\begin{align*}
     C_{r,2}=\E R_2^{r}&=\frac{\zeta(r+2)}{\zeta(2)},\\
     C_{r,3}=\E  R_3^{r}&=\zeta(r+2)\zeta(2r+3)\prod_{p\in\mathcal{P}}\left(1-\frac{1}{p}\right)^2\left(1+\frac{2}{p}+\frac{2}{p^{r+2}}+\frac{1}{p^{r+3}}\right).
\end{align*}
\end{corollary}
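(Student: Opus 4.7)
The idea is to combine the product representation \eqref{eq:r_k_def} of $R_k$, the independence of the geometric variables across different primes, and the formula of Proposition~\ref{prop:z_k_p_prop} evaluated at $t=p^{-r}$.

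\textbf{Step 1: Factorization of the moment.} For every fixed $r\in\N_0$, the random variable $Z_k(p)$ defined in \eqref{eq:sup-sum} is nonnegative, so $0<p^{-rZ_k(p)}\leq 1$ for each $p$, and the finite products $\prod_{p\leq N}p^{-rZ_k(p)}$ form a nonincreasing sequence in $N$. In particular $R_k\in(0,1]$ almost surely, and by the monotone (or dominated) convergence theorem together with the mutual independence of the families $\{\mathcal{G}_j(p):j=1,\ldots,k\}$ indexed by distinct primes $p\in\mathcal{P}$, one may interchange expectation and infinite product:
\begin{equation*}
     \E R_k^r=\E\prod_{p\in\mathcal{P}}p^{-rZ_k(p)}=\prod_{p\in\mathcal{P}}\E\bigl(p^{-r}\bigr)^{Z_k(p)}.
\end{equation*}

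\textbf{Step 2: Apply Proposition~\ref{prop:z_k_p_prop}.} Since $r\geq 0$ implies $t:=p^{-r}\in(0,1]\subset[-p,p]$, the generating function formula of Proposition~\ref{prop:z_k_p_prop} applies. Substituting $t=p^{-r}$ and using the identifications $t/p=1/p^{r+1}$, $(t/p)^j=1/p^{j(r+1)}$, and $t^{j-1}/p^j=1/p^{r(j-1)+j}=1/p^{(j-1)(r+1)+1}$ yields exactly
\begin{equation*}
     \E\bigl(p^{-r}\bigr)^{Z_k(p)}=\frac{\left(1-\frac{1}{p}\right)^{k}}{\left(1-\frac{1}{p^{r+1}}\right)^{k}}\sum_{j=1}^{k}\binom{k}{j}(-1)^{j-1}\frac{1-\frac{1}{p^{j(r+1)}}}{1-\frac{1}{p^{(j-1)(r+1)+1}}},
\end{equation*}
which, inserted back into Step~1, gives the formula~\eqref{eq:moments_of_r_k}.

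\textbf{Step 3: Specializations.} For $k=1$, $Z_1(p)=0$ so each factor is $1$ and $\E R_1^r=1$. For $k=2$, apply the closed form $\E t^{Z_2(p)}=(1-1/p^2)/(1-t/p^2)$ with $t=p^{-r}$ to get $(1-1/p^2)/(1-1/p^{r+2})$; taking the product over $p$ and invoking the Euler product of $\zeta$ yields $\zeta(r+2)/\zeta(2)$. For $k=3$, substitute $t=p^{-r}$ in the explicit formula for $\E t^{Z_3(p)}$; the denominator contributes $\zeta(r+2)\zeta(2r+3)$ via the Euler product, and the remaining factor $(1-1/p)^2(1+2/p+2/p^{r+2}+1/p^{r+3})$ stays as an explicit prime product.

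\textbf{Anticipated difficulty.} The only nontrivial point is the exchange of expectation and infinite product in Step~1; this is however immediate thanks to the bound $R_k\leq 1$ (so each partial product is dominated by $1$ and decreases to $R_k^r$ monotonically when $r\in\N_0$). All subsequent steps are just algebraic substitution and the Euler factorization of $\zeta$, so no further obstacle arises.
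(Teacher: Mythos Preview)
Your proposal is correct and matches the paper's own treatment: the paper simply states that the corollary follows ``immediately'' from Proposition~\ref{prop:z_k_p_prop}, and your Steps~1--3 spell out exactly that derivation (independence to factorize the expectation, substitution $t=p^{-r}$, then specialization to $k=1,2,3$). The justification of the expectation--product interchange via monotonicity and boundedness is a welcome detail that the paper leaves implicit.
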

In general, it is not possible to further simplify the above Euler products.
Notice that many well-known constants (in particular in number theory) have
Euler product expansions, see e.g.~\cite{Moree00, Niklasch02} and the
whole Chap.~2 in~\cite{Finch:2009}.

\section{Proofs}
\label{sec:proofs}

\begin{proof}[Proof of Proposition~\ref{prop:limit_convergence}]
Passing to logarithms, we see that the a.s.\ convergence of the infinite product is equivalent to the a.s.\ convergence of the series
\begin{equation*}
     \sum_{p\in\mathcal{P}}\left(\log \vert f(p^{\vee_{j=1}^{k}\mathcal{G}_j(p)})\vert-r\sum_{j=1}^{k}\mathcal{G}_j(p)\log p\right).
\end{equation*}
First of all, note that since $f(1)=1$ it is enough to show that
\begin{equation}\label{eq:proof_series_conv}
     \sum_{p\in\mathcal{P}}\left(\log \vert f(p^{\vee_{j=1}^{k}\mathcal{G}_j(p)})\vert-r\sum_{j=1}^{k}\mathcal{G}_j(p)\log p\right)\1_{\{\sum_{j=1}^{k}\mathcal{G}_j(p)\geq 1\}}
\end{equation}
converges a.s. Further, we apply the Borel--Cantelli lemma to check that for any $k\geq1$,
\begin{equation}\label{eq:proof_bc_arg}
     \P\left\{\sum_{j=1}^{k}\mathcal{G}_j(p) \geq  2 \text{ for infinitely many }p\in\mathcal{P}\right\}=0.
\end{equation}
Indeed,
\begin{align*}
\sum_{p\in\mathcal{P}}\P\left\{\sum_{j=1}^{k}\mathcal{G}_j(p) \geq  2\right\}&=\sum_{p\in\mathcal{P}}\left(1-\P\left\{\sum_{j=1}^{k}\mathcal{G}_j(p)=0\right\}-\P\left\{\sum_{j=1}^{k}\mathcal{G}_j(p)=1\right\}\right)\\
&=\sum_{p\in\mathcal{P}}\left(1-\left(1-\frac{1}{p}\right)^k-k\left(1-\frac{1}{p}\right)^{k}\frac{1}{p}\right)\\
&=\sum_{p\in\mathcal{P}}\left(1-\left(1-\frac{1}{p}\right)^k\left(1+\frac{k}{p}\right)\right)\\
&\leq\sum_{p\in\mathcal{P}}\left(1-\left(1-\frac{k}{p}\right)\left(1+\frac{k}{p}\right)\right)=k^2\sum_{p\in\mathcal{P}}\frac{1}{p^2}<\infty.
\end{align*}
Thus, the event $\{\sum_{j=1}^{k}\mathcal{G}_j(p) \geq  2\}$ occurs only for finitely many $p\in\mathcal{P}$ a.s.\ and the convergence of \eqref{eq:proof_series_conv} is equivalent to that of
\begin{equation}\label{eq:proof_series_conv2}
     \sum_{p\in\mathcal{P}}\left(\log \vert f(p)\vert-r\log p\right)\1_{\{\sum_{j=1}^{k}\mathcal{G}_j(p)=1\}}=\sum_{p\in\mathcal{P}}B_{p,r}\1_{\{\sum_{j=1}^{k}\mathcal{G}_j(p)=1\}},
\end{equation}
because obviously the event $\{\sum_{j=1}^{k}\mathcal{G}_j(p)=1\}$ implies $\{\vee_{j=1}^{k}\mathcal{G}_j(p)=1\}$. Note that the series in \eqref{eq:proof_series_conv2} consists of independent summands. Therefore, the assumptions \ref{it-main:1}, \ref{it-main:2} and \ref{it-main:3} are necessary and sufficient for the a.s.\ convergence of \eqref{eq:proof_series_conv2} by Kolmogorov's three series theorem (see page 317 in \cite{Feller:1971}), since
\begin{equation*}
     \P\left\{\sum_{j=1}^{k}\mathcal{G}_j(p)=1\right\}=k\left(1-\frac{1}{p}\right)^{k}\frac{1}{p}\sim \frac{k}{p},\quad p\to\infty.\qedhere
\end{equation*}
\end{proof}

The main ingredient in the subsequent proofs is contained in the following elementary lemma. Its first part is well known in the probabilistic literature and is given explicitly in \cite{Arratia+Barbour+Tavare:2003}, see formula (1.45) on page 28 therein. The second and third parts are just slight extensions thereof. Recall that $X^{(n)}$ denotes a random variable with uniform distribution on $\{1,2,\ldots,n\}$.

\begin{lemma}\label{lem:conv_to_geom}
Let
\begin{equation*}
     X^{(n)}=\prod_{p\in\mathcal{P}}p^{\lambda_p(X^{(n)})}
\end{equation*}
be the decomposition of $X^{(n)}\in \{1,2,\ldots,n\}$ into prime factors. Then
\begin{enumerate}[label={(\roman{*})},ref={(\roman{*})}]
     \item\label{itt:1}we have 
\begin{equation*}
     \bigl(\lambda_p(X^{(n)})\bigr)_{p\in\mathcal{P}}\todistr \left(\mathcal{G}(p)\right)_{p\in\mathcal{P}};
\end{equation*}
     \item\label{itt:2}we have
\begin{equation*}
     \left(\frac{X^{(n)}}{n},\bigl(\lambda_p(X^{(n)})\bigr)_{p\in\mathcal{P}}\right)\todistr \left(U,\left(\mathcal{G}(p)\right)_{p\in\mathcal{P}}\right),
\end{equation*} 
with $U$ being uniformly distributed on $[0,1]$ and independent of $\left(\mathcal{G}(p)\right)_{p\in\mathcal{P}}$;
     \item\label{itt:3}for $p,q\in\mathcal{P}$, $p\neq q$ and $k_p,k_q\in\N_0$, we have
\begin{equation*}
     \P\{\lambda_p(X^{(n)})=k_p,\lambda_q(X^{(n)})=k_q\}=\left(1-\frac{1}{p}\right)\left(1-\frac{1}{q}\right)\frac{1}{p^{k_p}q^{k_q}}+O\left(\frac{1}{n}\right),
\end{equation*}
where the constant in the $O$-term does not depend on $(p,q,k_p,k_q)$.
\end{enumerate}
\end{lemma}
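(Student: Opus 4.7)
The proof will hinge on a single quantitative count that simultaneously controls the three claims. The plan is to establish, for any finite set $F\subset\mathcal{P}$ and any choice of exponents $(k_p)_{p\in F}\in\N_0^{F}$, the estimate
\begin{equation*}
\#\bigl\{x\in\{1,\ldots,N\}:\lambda_p(x)=k_p\text{ for every }p\in F\bigr\}
= N\prod_{p\in F}\left(1-\frac{1}{p}\right)\frac{1}{p^{k_p}}+\varepsilon_{F}(N,\mathbf{k}),
\end{equation*}
with $|\varepsilon_F(N,\mathbf{k})|\le 2^{|F|}$. For this, I would write the event $\lambda_p(x)=k_p$ as $p^{k_p}\mid x$ and $p^{k_p+1}\nmid x$, and then apply inclusion--exclusion on the ``bad'' upper conditions across $p\in F$. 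Using the Chinese Remainder Theorem to merge divisibility by distinct prime powers into divisibility by their product, the count becomes $\sum_{S\subseteq F}(-1)^{|S|}\lfloor N/M_S\rfloor$ with $M_S=\prod_{p\in F}p^{k_p}\cdot\prod_{p\in S}p$. Replacing each $\lfloor N/M_S\rfloor$ by $N/M_S$ produces the main term (an Euler-product-like factorization), and since each such replacement introduces an error of modulus at most $1$, the total error is bounded by $2^{|F|}$, independently of the exponents $(k_p)_{p\in F}$.

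Part \ref{itt:1} follows immediately: dividing the count by $n$ gives the finite-dimensional marginals of $(\lambda_p(X^{(n)}))_{p\in\mathcal{P}}$, and these converge to $\prod_{p\in F}(1-1/p)p^{-k_p}$, which are exactly the joint masses of an independent family $(\mathcal{G}(p))_{p\in\mathcal{P}}$. Convergence in distribution in the Polish space $\N_0^{\mathcal{P}}$ endowed with the product topology reduces to convergence of the finite-dimensional marginals, so the statement is proved. Part \ref{itt:3} is obtained by specializing the count to $F=\{p,q\}$, giving an error bounded by $4/n$; crucially, because the inclusion--exclusion has only $2^{|F|}=4$ terms each contributing at most $1$, the $O(1/n)$ bound holds uniformly in $(p,q,k_p,k_q)$.

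For part \ref{itt:2}, I would rerun the same counting argument on the truncated range $\{1,\ldots,\lfloor tn\rfloor\}$ in place of $\{1,\ldots,n\}$. Nothing changes in the inclusion--exclusion/CRT step, and dividing by $n$ yields
\begin{equation*}
\P\left\{\frac{X^{(n)}}{n}\le t,\ \lambda_p(X^{(n)})=k_p\text{ for every }p\in F\right\}\ \xrightarrow[n\to\infty]{}\ t\cdot\prod_{p\in F}\left(1-\frac{1}{p}\right)\frac{1}{p^{k_p}}.
\end{equation*}
The product structure of this limit in $t$ and in $(k_p)$ gives simultaneously (a) the convergence $X^{(n)}/n\todistr U$ uniform on $[0,1]$, (b) the convergence of the exponents to independent geometrics as in part \ref{itt:1}, and (c) the asymptotic independence of the continuous marginal from the integer-valued marginals. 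Finite-dimensional convergence of the joint law in the Polish product space $[0,1]\times\N_0^{\mathcal{P}}$ then yields the desired convergence in distribution.

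The only genuinely delicate point is the bookkeeping in the inclusion--exclusion step, especially ensuring that the CRT application is valid when the chosen exponents are arbitrary (which it is, since distinct primes always give coprime prime powers), and tracking that the error constant in part \ref{itt:3} is truly independent of the data $(p,q,k_p,k_q)$ rather than depending implicitly on $\max(p^{k_p},q^{k_q})$; this is what the uniform bound $|\varepsilon_F|\le 2^{|F|}$ provides. Everything else is a routine consequence of the ``density + uniform error'' estimate above.
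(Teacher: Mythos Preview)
Your proof is correct and follows essentially the same route as the paper: both arguments rest on the elementary count $\#\{x\le N: M\mid x\}=\lfloor N/M\rfloor$ combined with the Chinese Remainder Theorem, then pass to the limit. The only cosmetic difference is that the paper works with the tail events $\{\lambda_p(X^{(n)})\ge c_p\}$ for parts \ref{itt:1} and \ref{itt:2} (so a single floor suffices there, no inclusion--exclusion needed) and only unfolds the inclusion--exclusion for part \ref{itt:3}, whereas you set up the equality-event inclusion--exclusion once and reuse it for all three parts; the resulting error bound $4/n$ versus the paper's $2/n$ is immaterial.
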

\begin{proof}
Let us prove part \ref{itt:2}. Fix $x\in (0,1]$, $p\in\mathcal{P}$ and $c_2,c_3,\ldots,c_p\in\N_0$. One has
\begin{align*}
\P\{X^{(n)}\leq nx,&\,\lambda_2(X^{(n)})\geq c_2,\lambda_3(X^{(n)})\geq c_3,\ldots,\lambda_p(X^{(n)})\geq c_p\}\\
&=\P\{X^{(n)}\leq \lfloor nx\rfloor,X^{(n)}\text{ is divisible by } 2^{c_2}3^{c_3}\cdots p^{c_p}\}\\
&=\frac{1}{n}\#\{k\in\{1,2,\ldots,\lfloor nx\rfloor\}:k\text{ is divisible by }2^{c_2}3^{c_3}\cdots p^{c_p}\}\\
&=\frac{1}{n}\left\lfloor\frac{\lfloor nx\rfloor}{2^{c_2}3^{c_3}\cdots p^{c_p}}\right\rfloor\\&\underset{n\to\infty}{\longrightarrow}\frac{x}{2^{c_2}3^{c_3}\cdots p^{c_p}}
=\P\{U\leq x,\mathcal{G}(2)\geq c_2,\mathcal{G}(3)\geq c_3,\ldots,\mathcal{G}(p)\geq c_p\}.
\end{align*}
Part \ref{itt:1} obviously follows from part \ref{itt:2}. For the item \ref{itt:3}, notice that
\begin{equation*}
     \P\{\lambda_p(X^{(n)})\geq i,\lambda_q(X^{(n)})\geq j\}=\frac{1}{n}\left\lfloor\frac{n}{p^iq^j}\right\rfloor\in \Big(\frac{1}{p^iq^j}-\frac{1}{n},\frac{1}{p^iq^j}\Big]
\end{equation*}
and thus
\begin{multline*}
\P\{\lambda_p(X^{(n)})=k_p,\lambda_q(X^{(n)})=k_q\}\\
\in \left[\left(1-\frac{1}{p}\right)\left(1-\frac{1}{q}\right)\frac{1}{p^{k_p}q^{k_q}}-\frac{2}{n},\left(1-\frac{1}{p}\right)\left(1-\frac{1}{q}\right)\frac{1}{p^{k_p}q^{k_q}}+\frac{2}{n}\right].
\end{multline*}
The proof is complete.
\end{proof}

\begin{proof}[Proof of Theorem~\ref{thm:main1}]
With Lemma~\ref{lem:conv_to_geom} at hand, the proof of Theorem~\ref{thm:main1} is more or less straightforward. From Proposition~\ref{prop:limit_convergence}, we already know that \ref{it-main:1} and \ref{it-main:2} are equivalent.

Let us show that \ref{it-main:1} implies \ref{it-main:3}. Let us first write the prime power decompositions
\begin{equation*}
     X^{(n)}_j=\prod_{p\in\mathcal{P}}p^{\lambda_p(X^{(n)}_j)},\quad j=1,\ldots,k.
\end{equation*}
Then
\begin{equation*}
     L_n(k)=\LCM\{X^{(n)}_1,\ldots,X^{(n)}_k\}=\prod_{p\in\mathcal{P}}p^{\vee_{j=1}^{k}\lambda_p(X^{(n)}_j)},
\end{equation*}
and, using multiplicativity of $f$,
\begin{equation*}
     \frac{f(L_n(k))}{\bigl(X_1^{(n)}X_2^{(n)}\cdots X_k^{(n)}\bigr)^{r}}=\prod_{p\in\mathcal{P}}\frac{f(p^{\vee_{j=1}^{k}\lambda_p(X^{(n)}_j)})}{p^{r\sum_{j=1}^{k}\lambda_p(X^{(n)}_j)}}.
\end{equation*}
Fix $m\in\N$ and decompose
\begin{equation*}
     \frac{f(L_n(k))}{\bigl(X_1^{(n)}X_2^{(n)}\cdots X_k^{(n)}\bigr)^{r}}=\left(\prod_{p\in\mathcal{P},\,p\leq m}\cdots \right)\left(\prod_{p\in\mathcal{P},\,p > m}\cdots\right)=:Y_m(n)Z_m(n).
\end{equation*}
By Lemma~\ref{lem:conv_to_geom} \ref{itt:1} and the continuous mapping theorem, see Theorem 2.7 in \cite{Billingsley:1999},
\begin{equation*}
     Y_m(n)\todistr X_{f,\infty,k}(m):=\prod_{p\in\mathcal{P},\,p\leq m}\frac{f(p^{\vee_{j=1}^{k}\mathcal{G}_j(p)})}{p^{r\sum_{j=1}^{k}\mathcal{G}_j(p)}}.
\end{equation*}
By \ref{it-main:1} we have
\begin{equation*}
     X_{f,\infty,k}(m)\toasm X_{f,\infty,k}.
\end{equation*}
Denoting by $E_\varepsilon$ the event $\{\vert\log (\vert Z_m(n)\vert)\vert>\varepsilon\}$ and using Theorem 3.2 in \cite{Billingsley:1999}, it remains to show that for every fixed $\varepsilon>0$,
\begin{equation}\label{thm:proof_bill2}
     \lim_{m\to\infty}\limsup_{n\to\infty}\P\{E_\varepsilon\}=0.
\end{equation}
We have
\begin{align*}
\P\{E_\varepsilon\}& =  \P\left\{\text{for some }p\in\mathcal{P},\,p>m,\sum_{j=1}^{k}\lambda_p(X_j^{(n)})\geq 2, {E_\varepsilon}\right\}\\
&\quad +\P\left\{\text{for all }p\in\mathcal{P},\,p>m,\sum_{j=1}^{k}\lambda_p(X_j^{(n)})\leq 1, {E_\varepsilon}\right\}\\
&\leq\sum_{p\in\mathcal{P},\,p>m}\P\left\{\sum_{j=1}^{k}\lambda_p(X_j^{(n)})\geq 2\right\}\\
&\quad+\P\left\{\left\vert\sum_{p\in\mathcal{P},\,p>m}B_{p,r}\1_{\{\sum_{j=1}^{k}\lambda_p(X_j^{(n)})=1\}}\right\vert>\varepsilon\right\}=:P^{(1)}_m(n)+P^{(2)}_m(n).
\end{align*}
We deal with the latter two summands separately. For $P^{(1)}_m(n)$, we have
\begin{align*}
P^{(1)}_m(n)&\leq \sum_{p\in\mathcal{P},\,p>m}\P\{\lambda_p(X_j^{(n)})\geq 2\text{ for some }j=1,\ldots,k\}\\
&\quad+\sum_{p\in\mathcal{P},\,p>m}\P\{\lambda_p(X_i^{(n)})\geq 1,\lambda_p(X_j^{(n)})\geq 1\text{ for some }1\leq i,j\leq k,i\neq j\}\\
&\leq k\sum_{p\in\mathcal{P},\,p>m}\P\{\lambda_p(X^{(n)})\geq 2\}+k(k-1)\sum_{p\in\mathcal{P},\,p>m}(\P\{\lambda_p(X^{(n)})\geq 1\})^2\\
&=k\sum_{p\in\mathcal{P},\,p>m}\frac{1}{n}\left\lfloor\frac{n}{p^2}\right\rfloor+k(k-1)\sum_{p\in\mathcal{P},\,p>m}\left(\frac{1}{n}\left\lfloor\frac{n}{p}\right\rfloor\right)^2\\
&\leq k^2 \sum_{p\in\mathcal{P},\,p>m}\frac{1}{p^2}\to 0,\quad m\to\infty.
\end{align*}

To deal with $P^{(2)}_m(n)$ we pick $A>0$ such that the conditions \ref{it:1}, \ref{it:2} and \ref{it:3} in Proposition~\ref{prop:limit_convergence} hold. We have
\begin{align}
     P^{(2)}_m(n)&\leq \P\left\{\left\vert\sum_{p\in\mathcal{P},\,p>m}B_{p,r}\1_{\{\vert B_{p,r}\vert\geq A,\sum_{j=1}^{k}\lambda_p(X_j^{(n)})=1\}}\right\vert>\frac{\varepsilon}{2}\right\}\nonumber\\\nonumber
&\quad+
\P\left\{\left\vert\sum_{p\in\mathcal{P},\,p>m}B_{p,r}\1_{\{\vert B_{p,r}\vert \leq  A,\sum_{j=1}^{k}\lambda_p(X_j^{(n)})=1\}}\right\vert>\frac{\varepsilon}{2}\right\}\\
&\leq \P\left\{\text{for some }p\in\mathcal{P},\,p>m,\vert B_{p,r}\vert\geq A,\sum_{j=1}^{k}\lambda_p(X_j^{(n)})=1\right\}\nonumber\\
&\quad+
\P\left\{\left\vert\sum_{p\in\mathcal{P},\,p>m}B_{p,r}\1_{\{\vert B_{p,r}\vert \leq A,\sum_{j=1}^{k}\lambda_p(X_j^{(n)})=1\}}\right\vert>\frac{\varepsilon}{2}\right\}.\label{eq:second_probab_to_estimate}
\end{align}
The first probability can be estimated as follows
\begin{align*}
&\hspace{-2.5cm}\P\left\{\text{for some }p\in\mathcal{P},\,p>m,\vert B_{p,r}\vert\geq A,\sum_{j=1}^{k}\lambda_p(X_j^{(n)})=1\right\}\\
&\leq \sum_{p\in\mathcal{P},\,p>m}\1_{\{\vert B_{p,r}\vert\geq A\}}\P\left\{\sum_{j=1}^{k}\lambda_p(X_j^{(n)})=1\right\}\\
&\leq \sum_{p\in\mathcal{P},\,p>m}\1_{\{\vert B_{p,r}\vert\geq A\}}k\P\left\{\lambda_p(X^{(n)})\geq 1\right\}\\
&=k\sum_{p\in\mathcal{P},\,p>m}\1_{\{\vert B_{p,r}\vert\geq A\}}\frac{1}{n}\left\lfloor\frac{n}{p}\right\rfloor\\
&\leq k\sum_{p\in\mathcal{P},\,p>m}\frac{\1_{\{\vert B_{p,r}\vert\geq A\}}}{p}\to 0,
\end{align*}
as $m\to\infty$, by assumption \ref{it:1} in Proposition~\ref{prop:limit_convergence}.

It remains to check that
\begin{equation}\label{eq:bill3}
\lim_{m\to\infty}\limsup_{n\to\infty}\P\left\{\left\vert\sum_{p\in\mathcal{P},\,p>m}B_{p,r}\1_{\{\vert B_{p,r}\vert \leq A,\sum_{j=1}^{k}\lambda_p(X_j^{(n)})=1\}}\right\vert>\frac{\varepsilon}{2}\right\}=0,
\end{equation}
see \eqref{eq:second_probab_to_estimate}. To that aim, we first notice that
\begin{equation*}
\left\{\sum_{j=1}^{k}\lambda_p(X_j^{(n)})=1\right\}=\bigcup_{j=1}^{k}\left\{\lambda_p(X_j^{(n)})=1,\lambda_p(X_i^{(n)})=0,\forall i\neq j\right\}=:\bigcup_{j=1}^{k}C_{j,p,n}.
\end{equation*}
Moreover, the events $(C_{j,p,n})_{j=1,\ldots,k}$ are disjoint and equiprobable. Thus, the limit \eqref{eq:bill3} follows if we can check that
\begin{equation*}
     \lim_{m\to\infty}\limsup_{n\to\infty}\P\left\{\left\vert\sum_{p\in\mathcal{P},\,p>m}B'_{p,r}\1_{C_{1,p,n}}\right\vert>\frac{\varepsilon}{2k}\right\}=0,
\end{equation*}
where $B'_{p,r}:=B_{p,r}\1_{\{\vert B_{p,r}\vert \leq A\}}$.

Keeping in mind that $\lambda_p(X^{(n)})=0$ for $p>n$, we see that that it is enough to prove that
\begin{equation}\label{eq:bill41}
     \lim_{m\to\infty}\limsup_{n\to\infty}\P\left\{\left\vert\sum_{p\in\mathcal{P},\,p\in(m,\sqrt{n}]}B'_{p,r}\1_{C_{1,p,n}}\right\vert>\frac{\varepsilon}{4k}\right\}=0
\end{equation}
as well as
\begin{equation}\label{eq:bill42}
\lim_{m\to\infty}\limsup_{n\to\infty}\P\left\{\left\vert\sum_{p\in\mathcal{P},\,p\in(\sqrt{n},n]}B'_{p,r}\1_{C_{1,p,n}}\right\vert>\frac{\varepsilon}{4k}\right\}=0.
\end{equation}
Note that $C_{1,p,n}\cap C_{1,q,n}=\varnothing$ if $p,q>\sqrt{n}$, thus \eqref{eq:bill42} is equivalent to
\begin{equation*}
     \lim_{m\to\infty}\limsup_{n\to\infty}\P\left\{\sum_{p\in\mathcal{P},\,p\in(\sqrt{n},n]}(B'_{p,r})^2\1_{C_{1,p,n}}>\frac{\varepsilon^2}{16k^2}\right\}=0.
\end{equation*}
The latter relation follows from Markov's inequality, since
\begin{align*}
&\hspace{-1cm}\P\left\{\sum_{p\in\mathcal{P},\,p\in(\sqrt{n},n]}(B'_{p,r})^2\1_{C_{1,p,n}}>\frac{\varepsilon^2}{16k^2}\right\}\\
&\leq \frac{16k^2}{\varepsilon^2}\sum_{p\in\mathcal{P},\,p\in(\sqrt{n},n]}(B'_{p,r})^2\P\{C_{1,p,n}\}\\
&\leq \frac{16k^2}{\varepsilon^2}\sum_{p\in\mathcal{P},\,p\in(n,\sqrt{n}]}(B'_{p,r})^2\P\{\lambda_p(X_1^{(n)})
\geq 1\}\\
&= \frac{16k^2}{\varepsilon^2} \sum_{p\in\mathcal{P},\,p\in(n,\sqrt{n}]}B^2_{p,r}\1_{\{\vert B_{p,r}\vert \leq A\}}\frac{1}{n}\left\lfloor \frac{n}{p}\right\rfloor\\
&\leq \frac{16k^2}{\varepsilon^2} \sum_{p\in\mathcal{P},\,p\in(n,\sqrt{n}]}\frac{B^2_{p,r}\1_{\{\vert B_{p,r}\vert \leq A\}}}{p}.
\end{align*}
The latter sum converges to zero as $n\to\infty$, by assumption \ref{it:3} in Proposition~\ref{prop:limit_convergence}.

In order to derive \eqref{eq:bill41}, we again use Markov's inequality to obtain
\begin{equation*}
\P\left\{\left\vert\sum_{p\in\mathcal{P},\,p\in(m,\sqrt{n}]}B'_{p,r}\1_{C_{1,p,n}}\right\vert>\frac{\varepsilon}{4k}\right\}
\leq \frac{16k^2}{\varepsilon^2}\E \left\{\left(\sum_{p\in\mathcal{P},\,p\in(m,\sqrt{n}]}B'_{p,r}\1_{C_{1,p,n}}\right)^2\right\},
\end{equation*}
and, further,
\begin{multline*}
\E\left\{\left(\sum_{p\in\mathcal{P},\,p\in(m,\sqrt{n}]}B'_{p,r}\1_{C_{1,p,n}}\right)^2\right\}\\=\sum_{p\in\mathcal{P},\,p\in(m,\sqrt{n}]}(B'_{p,r})^2\P\{C_{1,p,n}\}
+\sum_{p,q\in\mathcal{P},\,p,q\in(m,\sqrt{n}],\,p\neq q}B'_{p,r}B'_{q,r}\P\{C_{1,p,n}\cap C_{1,q,n}\}.
\end{multline*}
We have already estimated the first sum, and thus focus only on the second one. Firstly, as $pq\leq n$ and using part \ref{itt:3} of Lemma~\ref{lem:conv_to_geom}, we may write
\begin{align*}
&\P\{C_{1,p,n}\cap C_{1,q,n}\}\\
&=\P\{\lambda_p(X_1^{(n)})= 1,\lambda_q(X_1^{(n)})=1\} (\P\{\lambda_p(X_1^{(n)})=0,\lambda_q(X_1^{(n)})=0\})^{k-1}\\
&=\left(\left(1-\frac{1}{p}\right)\left(1-\frac{1}{q}\right)\frac{1}{pq}+O\left(\frac{1}{n}\right)\right)
\left(\left(1-\frac{1}{p}\right)\left(1-\frac{1}{q}\right)+O\left(\frac{1}{n}\right)\right)^{k-1}\\
&=\left(1-\frac{1}{p}\right)^k\left(1-\frac{1}{q}\right)^k\frac{1}{pq}+O\left(\frac{1}{n}\right).
\end{align*}
With the above expansion at hand, we have
\begin{align*}
&\hspace{-1cm}\sum_{p,q\in\mathcal{P},\,p,q\in(m,\sqrt{n}],\,p\neq q}B'_{p,r}B'_{q,r}\P\{C_{1,p,n}\cap C_{1,q,n}\}\\
&=\sum_{p,q\in\mathcal{P},\,p,q\in(m,\sqrt{n}],\,p\neq q}\left(\left(1-\frac{1}{p}\right)^k\frac{B'_{p,r}}{p}\right)\left(\left(1-\frac{1}{q}\right)^k\frac{B'_{q,r}}{q}\right)\\
&\quad+O\left(\frac{1}{n}\sum_{p,q\in\mathcal{P},\,p,q\in(m,\sqrt{n}],\,p\neq q}1\right).
\end{align*}
With $\pi(x)$ denoting the number of primes $p\leq x$, we have
\begin{equation*}
     \frac{1}{n}\sum_{p,q\in\mathcal{P},\,p,q\in(m,\sqrt{n}]}1\leq \frac{1}{n}\sum_{p,q\in\mathcal{P},\,p,q\leq\sqrt{n}}1\leq \frac{1}{n}\pi(\sqrt{n})^2\to 0
\end{equation*}
as $n\to\infty$, since $\pi(x)=o(x)$ as $x\to\infty$ by the prime number theorem.

Finally, using assumption \ref{it:2} of Proposition~\ref{prop:limit_convergence}, we see that
\begin{align*}
\lim_{m\to\infty}\limsup_{n\to\infty}&\sum_{p,q\in\mathcal{P},\,p,q\in(m,\sqrt{n}]}\left(\left(1-\frac{1}{p}\right)^k\frac{B'_{p,r}}{p}\right)\left(\left(1-\frac{1}{q}\right)^k\frac{B'_{q,r}}{q}\right)\\
&=\lim_{m\to\infty}\limsup_{n\to\infty}\left(\sum_{p\in\mathcal{P},\,p\in(m,\sqrt{n}]}\left(\left(1-\frac{1}{p}\right)^k\frac{B'_{p,r}}{p}\right)\right)^2\\&=0,
\end{align*}
and \eqref{eq:bill41} follows.

Summarizing, we see that \ref{it-main:1} implies \ref{it-main:3}. To see that \ref{it-main:3} implies \ref{it-main:4}, just note that by Lemma~\ref{lem:conv_to_geom} \ref{itt:2} and \ref{itt:3}, we have
\begin{equation*}
     \left(\frac{f(L_n(k))}{\bigl(X_1^{(n)}X_2^{(n)}\cdots X_k^{(n)}\bigr)^{r}},\frac{X_1^{(n)}}{n},\ldots,\frac{X_k^{(n)}}{n}\right)\todistr  \left(X_{f,\infty,k},U_1,\ldots,U_k\right).
\end{equation*}
Therefore, \eqref{eq:thm1_conv2}  follows by the continuity of multiplication and the continuous mapping theorem. Obviously, \ref{it-main:4} implies \ref{it-main:1}. The proof of Theorem~\ref{thm:main1} is complete.
\end{proof}

\begin{proof}[Proof of Proposition~\ref{prop:z_k_p_prop}]
We start with an auxiliary lemma, which in our opinion is interesting in its own and will be extended in Appendix \ref{app:generalization_diophantine} to more general Diophantine equations.

\begin{lemma}\label{lem:compositions}
Let $k\in\N$ and $\ell,m\in\N_0$ be fixed integers. Then
\begin{equation}
\label{eq:compositions1}
     \sum_{a_1,\ldots,a_k=0}^{\infty}\1_{\{a_1+\cdots+a_k=\ell,\vee_{j=1}^{k}a_j\leq m\}}=[z^\ell]\left(\frac{1-z^{m+1}}{1-z}\right)^k
\end{equation}
and
\begin{equation}
\label{eq:compositions2}
     \sum_{a_1,\ldots,a_k=0}^{\infty}\1_{\{a_1+\cdots+a_k=\ell,\vee_{j=1}^{k}a_j=m\}}=[z^\ell]\left(\left(\frac{1-z^{m+1}}{1-z}\right)^k-\left(\frac{1-z^{m}}{1-z}\right)^k\right).
\end{equation}
\end{lemma}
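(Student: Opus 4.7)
The plan is to recognize both identities as instances of the classical one-to-one correspondence between integer compositions with bounded parts and coefficients of finite geometric-series products. For the first identity, I would start from the elementary observation that the polynomial
\[
1+z+z^2+\cdots+z^m=\frac{1-z^{m+1}}{1-z}
\]
is the generating function whose coefficient of $z^a$ is the indicator $\1_{\{0\leq a\leq m\}}$, i.e.\ the ``generating function for a single variable $a$ ranging over $\{0,1,\ldots,m\}$, weighted by its contribution $z^a$ to a sum''. Taking the $k$-th power and expanding mechanically,
\[
\left(\frac{1-z^{m+1}}{1-z}\right)^{k}
=\sum_{a_1,\ldots,a_k=0}^{m}z^{a_1+\cdots+a_k}
=\sum_{\ell\geq 0}z^{\ell}\sum_{a_1,\ldots,a_k=0}^{\infty}\1_{\{a_1+\cdots+a_k=\ell,\,\vee_{j=1}^{k}a_j\leq m\}},
\]
and extracting the coefficient of $z^\ell$ yields~\eqref{eq:compositions1}.

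For the second identity, I would use the elementary set-theoretic decomposition
\[
\{\vee_{j=1}^{k}a_j=m\}=\{\vee_{j=1}^{k}a_j\leq m\}\setminus\{\vee_{j=1}^{k}a_j\leq m-1\},
\]
valid for $m\geq 1$, together with the fact that these two events are nested. Counting via indicators and applying \eqref{eq:compositions1} to each of the two sets gives
\[
\sum_{a_1,\ldots,a_k=0}^{\infty}\1_{\{a_1+\cdots+a_k=\ell,\,\vee_{j=1}^{k}a_j=m\}}=[z^\ell]\left(\frac{1-z^{m+1}}{1-z}\right)^{k}-[z^\ell]\left(\frac{1-z^{m}}{1-z}\right)^{k},
\]
which is precisely \eqref{eq:compositions2}. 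The boundary case $m=0$ reduces to $\ell=0$ and a single configuration $(0,\ldots,0)$; interpreting $\bigl((1-z^0)/(1-z)\bigr)^k=0$ (the empty product of terms), the formula still holds, so no separate treatment is needed.

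There is essentially no obstacle in this proof: both statements are bookkeeping identities for generating functions of bounded compositions, and the only mild subtlety is the edge case $m=0$, which is handled by the convention above. The main value of the lemma lies not in its difficulty but in setting up a generating-function framework that will be leveraged afterwards (and generalized in the appendix) to compute $\E t^{Z_k(p)}$ in Proposition~\ref{prop:z_k_p_prop}.
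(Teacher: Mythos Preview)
Your proof is correct and, in fact, more direct than the paper's. The paper first invokes a cited result (Flajolet--Sedgewick) for compositions of $\ell$ into $i$ parts from $\{1,\ldots,m\}$, then sums over the number $i$ of nonzero parts with a binomial weight $\binom{k}{i}$ and collapses the resulting sum via the binomial theorem into $\bigl((1-z^{m+1})/(1-z)\bigr)^k$. You bypass this detour entirely by observing from the outset that $\sum_{a=0}^{m}z^{a}$ is already the single-part generating function allowing the value~$0$, so the $k$-fold product gives \eqref{eq:compositions1} immediately. Both arguments obtain \eqref{eq:compositions2} by the same subtraction. One small wording slip: in the $m=0$ case the expression $(1-z^{0})/(1-z)$ vanishes because the underlying \emph{sum} $\sum_{a=0}^{-1}z^{a}$ is empty, not an ``empty product''; the conclusion is unaffected.
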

\begin{proof}
It is known that the number of compositions of $\ell$ into $i\in\N_0$ summands from the set $\{1,2,\ldots,m\}$ is given by
\begin{equation*}
     [z^\ell]\left(z\left(\frac{1-z^m}{1-z}\right)\right)^i,
\end{equation*}
see e.g.\ {\bf I.15} on page~45 in \cite{Flajolet+Sedgewick:2009}. Note that the quantity on the left-hand side of~\eqref{eq:compositions1} is equal to the number of compositions of $\ell$ into $k$ summands from the set $\{0,1,2,\ldots,m\}$. Let $i\in\{0,\ldots,k\}$ be the number of non-zero summands. Then
\begin{align*}
     \sum_{a_1,\ldots,a_k=0}^{\infty}\1_{\{a_1+\cdots+a_k=l,\vee_{j=1}^{k}a_j\leq m\}}&=\sum_{i=0}^{k}\binom{k}{i}[z^\ell]\left(z\left(\frac{1-z^m}{1-z}\right)\right)^i\\
     &=[z^\ell]\sum_{i=0}^{k}\binom{k}{i}\left(z\left(\frac{1-z^m}{1-z}\right)\right)^i\\
     &=[z^\ell]\left(1+z\left(\frac{1-z^m}{1-z}\right)\right)^k\\
     &=[z^\ell]\left(\frac{1-z^{m+1}}{1-z}\right)^k,
\end{align*}
which proves \eqref{eq:compositions1}. Formula \eqref{eq:compositions2} follows by subtraction.
\end{proof}

Now we are in position to prove Proposition~\ref{prop:z_k_p_prop}. We have:
\begin{align*}
\E t^{Z_k(p)}&=\sum_{\ell=0}^{\infty}t^\ell \sum_{a_1,\ldots,a_k=0}^{\infty}\left(1-\frac{1}{p}\right)^k\frac{1}{p^{a_1+\cdots+a_k}}\1_{\{a_1+\cdots+a_k-\vee_{j=1}^{k}a_j=\ell\}}\\
&=\left(1-\frac{1}{p}\right)^k\sum_{\ell=0}^{\infty}t^\ell \sum_{m=0}^{\infty}\sum_{a_1,\ldots,a_k=0}^{\infty}\frac{1}{p^{\ell+m}}\1_{\{a_1+\cdots+a_k=\ell+m,\vee_{j=1}^{k}a_j=m\}}\\
&=\left(1-\frac{1}{p}\right)^k\sum_{\ell=0}^{\infty}\left(\frac{t}{p}\right)^\ell \sum_{m=0}^{\infty}p^{-m}\sum_{a_1,\ldots,a_k=0}^{\infty}\1_{\{a_1+\cdots+a_k=\ell+m,\vee_{j=1}^{k}a_j=m\}}.
\end{align*}
Using Lemma~\ref{lem:compositions} we continue as follows
\begin{align*}
\E t^{Z_k(p)}&=\left(1-\frac{1}{p}\right)^k\sum_{m=0}^{\infty}p^{-m}\sum_{l=0}^{\infty}(t/p)^l [z^{l+m}]\left[\left(\frac{1-z^{m+1}}{1-z}\right)^k-\left(\frac{1-z^{m}}{1-z}\right)^k\right]\\
&=\left(1-\frac{1}{p}\right)^k\sum_{m=0}^{\infty}p^{-m}\sum_{l=0}^{\infty}(t/p)^l [z^{l}]\left[z^{-m}\left(\left(\frac{1-z^{m+1}}{1-z}\right)^k-\left(\frac{1-z^{m}}{1-z}\right)^k\right)\right]\\
&=\left(1-\frac{1}{p}\right)^k\sum_{m=0}^{\infty}t^{-m}\left[\left(\left(\frac{1-(t/p)^{-(m+1)}}{1-t/p}\right)^k-\left(\frac{1-(t/p)^{-m}}{1-t/p}\right)^k\right)\right],
\end{align*}
where the last equality follows by evaluating the term in square brackets at $z=t/p$. The claim of lemma is now a simple consequence of the binomial theorem and subsequent evaluation of resulting geometric series.
\end{proof}

\appendix
\section{On a Diophantine equation}
\label{app:generalization_diophantine}
In passing, the proof of Proposition~\ref{prop:z_k_p_prop} shows that the number $q_l$ of solutions $(a_1,\ldots ,a_k)\in\mathbb N_0^k$ of the Diophantine equation
\begin{equation}
\label{eq:dio0}
     a_1+\cdots +a_k-\vee_{j=1}^{k}a_j=\ell,\quad \ell\in\N,
\end{equation}
has a rational generating function which can be expressed as follows:
\begin{equation*}
     \sum_{\ell=0}^{\infty}q_{\ell}t^{\ell}=\sum_{m=0}^{\infty}t^{-m}\left(\left(\frac{1-t^{m+1}}{1-t}\right)^{k}-\left(\frac{1-t^{m}}{1-t}\right)^{k}\right).
\end{equation*}
This may be generalized in the following way. For fixed $(x_1,\ldots,x_k)\in\mathbb N^k$ and $b\in\mathbb N$, consider the Diophantine equation
\begin{equation}
\label{eq:dio}
     x_1a_1+\cdots +x_ka_k-b\vee_{j=1}^{k}a_j=\ell
\end{equation}
and denote by $q_\ell$ the number of solutions $(a_1,\ldots ,a_k)\in\mathbb N_0^k$ to \eqref{eq:dio}.
\begin{theorem}
We have
\begin{equation*}
     f_{k,b}^{(x_i)}(t):=\sum_{\ell=0}^{\infty}q_\ell t^\ell =\sum_{m=0}^{\infty}t^{-b m}\left(\prod_{j=1}^{k}\frac{1-t^{(m+1)x_j}}{1-t^{x_j}}-\prod_{i=1}^{k}\frac{1-t^{mx_j}}{1-t^{x_j}}\right),\quad |t|<1.
\end{equation*}
In particular, the generating function $f_{k,b}^{(x_i)}$ is rational.
\end{theorem}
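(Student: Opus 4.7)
The plan is to mirror the strategy used in the proof of Proposition~\ref{prop:z_k_p_prop}, stratifying the tuples $(a_1,\ldots,a_k)\in\N_0^k$ by the value $m:=\vee_{j=1}^{k}a_j$ of their maximum. In order to count solutions with a prescribed weighted sum $\sum_j x_j a_j$, the first ingredient I need is a weighted generalization of Lemma~\ref{lem:compositions}.

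\textbf{Step 1: Weighted composition lemma.} For fixed $m,N\in\N_0$, the number of tuples $(a_1,\ldots,a_k)\in\{0,1,\ldots,m\}^k$ with $x_1 a_1+\cdots+x_k a_k=N$ is precisely the coefficient of $z^N$ in the polynomial
\begin{equation*}
    \prod_{j=1}^{k}\left(1+z^{x_j}+z^{2x_j}+\cdots+z^{m x_j}\right)=\prod_{j=1}^{k}\frac{1-z^{(m+1)x_j}}{1-z^{x_j}},
\end{equation*}
since expanding the product and matching powers of $z$ enforces both the weighted-sum constraint and the upper bound $a_j\leq m$. Subtracting the analogous expression with $m$ replaced by $m-1$ yields the count with $\vee_{j=1}^{k}a_j$ equal to exactly $m$, namely $[z^N]G_m(z)$ where
\begin{equation*}
    G_m(z):=\prod_{j=1}^{k}\frac{1-z^{(m+1)x_j}}{1-z^{x_j}}-\prod_{j=1}^{k}\frac{1-z^{mx_j}}{1-z^{x_j}}.
\end{equation*}

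\textbf{Step 2: Assembling the generating function.} Conditioning on $m=\vee_j a_j$ turns \eqref{eq:dio} into $\sum_j x_j a_j=\ell+bm$ with $\vee_j a_j=m$, so
\begin{equation*}
    q_\ell=\sum_{m=0}^{\infty}[z^{\ell+bm}]G_m(z).
\end{equation*}
Multiplying by $t^\ell$, interchanging the two sums (all summands are nonnegative and each $G_m$ is a polynomial), and pulling out $t^{-bm}$ as in the proof of Proposition~\ref{prop:z_k_p_prop} gives
\begin{equation*}
    f_{k,b}^{(x_i)}(t)=\sum_{m=0}^{\infty}t^{-bm}\sum_{\ell=0}^{\infty}t^{\ell}\bigl[z^{\ell}\bigr]\bigl(z^{-bm}G_m(z)\bigr).
\end{equation*}
The minimum-degree nonzero term of $G_m(z)$ arises from the tuple with one $a_{j^\star}=m$ and the remaining $a_j=0$, contributing $z^{m\min_j x_j}$; provided $b\leq\min_j x_j$ (which in any case covers the situation encountered in the body of the paper, where $x_j=r+1$ and $b=r$) the factor $z^{-bm}G_m(z)$ is a genuine polynomial in $z$, so the inner $\ell$-sum is obtained by evaluating it at $z=t$, producing $t^{-bm}G_m(t)$ and hence the claimed closed form. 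In the general case one reads the identity as one of Laurent series, the negative-power contributions cancelling out after summation over $m$ (since $q_\ell=0$ for $\ell<0$).

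\textbf{Step 3: Rationality and convergence.} Each summand $t^{-bm}G_m(t)$ is a rational function of $t$, and rationality of the full sum follows as in Corollary~\ref{prop:r_k_prop}: expanding the numerators of $G_m$ via the binomial theorem exhibits $G_m(t)$ as a finite linear combination, with $m$-dependent signs and exponents, of terms of the form $t^{\alpha m+\beta}$ divided by $\prod_j(1-t^{x_j})^k$. Summing these over $m\geq 0$ yields geometric series that converge for $|t|<1$ and collapse to explicit rational functions, establishing both the formula and the rationality claim.

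The main obstacle I anticipate is not any single step but a bookkeeping one: keeping track of the relevant interchange of sums and of the Laurent versus polynomial interpretation of $z^{-bm}G_m(z)$, especially when $b>\min_j x_j$. The combinatorial heart of the argument (Step~1) is essentially the same as that of Lemma~\ref{lem:compositions}, and the analytic manipulations (Step~3) mimic those in the proof of Proposition~\ref{prop:z_k_p_prop}.
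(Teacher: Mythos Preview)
Your proposal is correct and follows essentially the same approach as the paper: stratify by $m=\vee_j a_j$, use the product formula $\prod_j(1-t^{(m+1)x_j})/(1-t^{x_j})$ for tuples bounded by $m$, and subtract to isolate the $\max=m$ slice. The only difference is that the paper works directly with the generating function $\sum_{a_1,\ldots,a_k} t^{\sum_j x_j a_j - b\max_j a_j}$ in the variable $t$, bypassing your coefficient-extraction detour in Step~2 and hence also the Laurent-series bookkeeping you flag.
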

\begin{proof}
Decomposing upon the value of the maximum of $a_1,\ldots,a_k$, one may write
\begin{align*}
     f_{k,b}^{(x_i)}(t)&=\sum_{a_1,\ldots,a_k=0}^{\infty} t^{x_1a_1+\cdots +x_ka_k-b\vee_{j=1}^{k}a_j}\\&=\sum_{m=0}^{\infty} t^{-b m}\sum_{a_1,\ldots,a_k=0}^{\infty} t^{x_1a_1+\cdots +x_ka_k}\1_{\{\vee_{j=1}^{k}a_j=m\}}.
\end{align*}
Further,
\begin{equation*}
     \sum_{a_1,\ldots,a_k=0}^{\infty} t^{x_1a_1+\cdots +x_ka_k}\1_{\{\vee_{j=1}^{k}a_j=m\}}=T_{k,m}^{(x_i)}(t)-T_{k,m-1}^{(x_i)}(t),
\end{equation*}
where we have put
\begin{equation*}
     T_{k,m}^{(x_i)}(t) = \sum_{a_1,\ldots,a_k=0}^{\infty} t^{x_1a_1+\cdots +x_ka_k}\1_{\{\vee_{j=1}^{k}a_j\leq m\}}.
\end{equation*}
It remains to apply formula (6) in \cite{Faaland:1972}, which is an extension of \eqref{eq:compositions1}, to obtain
\begin{equation*}
     T_{k,m}^{(x_i)}(t) = \prod_{j=1}^{k} \sum_{s=0}^{m}t^{sa_j}= \prod_{j=1}^{k}\frac{1-t^{(m+1)a_j}}{1-t^{a_j}}.\qedhere
\end{equation*}
\end{proof}

\section*{Acknowledgments}
We would like to express our gratitude to Djalil Chafa\"i and Richard Stanley for interesting discussions and to thank the anonymous referee for drawing our attention to related papers.


\begin{thebibliography}{9}
	
\bibitem{AbNi:2017}	
{\sc Abramovich, S. and Nikitin, Y. Yu.} (2017).
On the probability of co-primality of two natural numbers chosen at random: from Euler identity to Haar measure on the ring of adeles.
{\em Bernoulli News}, {\bf 24}, 7--13.
	
\bibitem{AKM:2018}
{\sc Alsmeyer, G., Kabluchko, Z. and Marynych, A.} (2019).
Limit theorems for the least common multiple of a random set of integers.
To appear in {\em Trans. Amer. Math. Soc.}
	
\bibitem{Apostol:1976}
{\sc Apostol, T.} (1976).
{\em Introduction to analytic number theory.} 
Undergraduate Texts in Mathematics.
Springer-Verlag, New York-Heidelberg.

\bibitem{Arratia+Barbour+Tavare:2003}
{\sc Arratia, R., Barbour, A.~D. and Tavar\'{e}, S.} (2003). 
{\em Logarithmic Combinatorial Structures: a Probabilistic Approach}.
EMS Monographs in Mathematics.
European Mathematical Society.

\bibitem{Billingsley:1999}
{\sc Billingsley, P.} (1999). 
{\em Convergence of probability measures.} 
Second edition.
Wiley Series in Probability and Statistics: Probability and Statistics.
John Wiley \& Sons.

\bibitem{BGT}
{\sc Bingham, N.~H., Goldie, C.~M. and Teugels, J.~L.} (1989).
{\em Regular variation}. 
Encyclopedia of Mathematics and its Applications, 27.
Cambridge University Press.

\bibitem{Cesaro:1884}
{\sc Ces\`aro, E.} (1884).
Probabilit\'es de certains faits arithm\'etiques.
{\em Mathesis}, {\bf 4}, 150--151.

\bibitem{Cesaro:1885}
{\sc Ces\`aro, E.} (1885).
Sur le plus grand commun diviseur de plusieurs nombres.
{\em Ann. Mat. Pura Appl.}, {\bf 13}, 291--294.

\bibitem{Cesaro:1885b}
{\sc Ces\`aro, E.} (1885).
\'Etude moyenne du plus grand commun diviseur de deux nombres.
{\em Ann. Mat. Pura Appl.}, {\bf 13}, 235--250.

\bibitem{CillRueSarka:14}
{\sc Cilleruelo, J., Ru\'e, J., {\v S}arka, P. and Zumalac\'arregui, A.} (2014).
The least common multiple of random sets of positive integers.
 {\em J. Numb. Theory}, {\bf 144}, 92--104.

\bibitem{CsSi:1995}
{\sc Cs\"{o}rg\H{o}, S. and Simons, G.} (1995).
Precision calculation of distributions for trimmed sums.
{\em Ann. Appl. Probab.}, {\bf 5}, no. 3, 854--873.

\bibitem{DiEr:2004}
{\sc Diaconis, P. and Erd\H{o}s, P.} (2004).
On the distribution of the greatest common divisor.
{\em Lecture Notes Monogr. Ser.}, {\bf 45}, 56--61.

\bibitem{Dirichlet:1849}
{\sc Dirichlet, G. L.} (1849).
\"{U}ber die Bestimmung der mittleren Werthe in der Zahlentheorie.
{\em Abhandlungen der K\"{o}niglich Preussischen Akademie der Wissenschaften}, 69--83.

\bibitem{Ehrhart:1967} 
{\sc Ehrhart, E.} (1967).
Sur un probl\`eme de g\'eom\'etrie diophantienne lin\'eaire. I. Poly\`edres et r\'eseaux. 
{\em J. Reine Angew. Math.}, {\bf226}, 1--29.

\bibitem{ErWi:1939}
{\sc Erd\H{o}s, P. and Wintner, A.} (1939).
Additive arithmetical functions and statistical independence.
{\em Amer. J. Math.}, {\bf61}, 713--721.

\bibitem{Faaland:1972}
{\sc Faaland, B.} (1972).
On the number of solutions to a Diophantine equation.
{\em J. Combinatorial Theory Ser. A}, {\bf13}, 170--175.

\bibitem{Feller:1971}
{\sc Feller, W.} (1971).
{\em An introduction to probability theory and its applications. Vol. II.}
Second edition John Wiley \& Sons, Inc., New York-London-Sydney.

\bibitem{Ferraguti+Micheli:2016}
{\sc Ferraguti, A. and Micheli, G.} (2016).
On the Mertens--Ces\`{a}ro theorem for number fields.
{\em Bull.~Aust.~Math.~Soc.}, {\bf 93}, no. 2, 199--210.

\bibitem{Fernandez+Fernandez:2013}
{\sc Fern\'{a}ndez, J. and Fern\'{a}ndez, P.} (2013). 
On the probability distribution of the gcd and lcm of $r$-tuples of integers. 
Preprint available at \url{https://arxiv.org/abs/1305.0536}.

\bibitem{Finch:2009}
{\sc Finch, S. R.} (2009).
{\em Mathematical constants.}
Encyclopedia of Mathematics and its Applications, 94.
Cambridge University Press.

\bibitem{Flajolet+Sedgewick:2009}
{\sc Flajolet, P. and Sedgewick, R.} (2009). 
{\em Analytic Combinatorics.}
Cambridge University Press.

\bibitem{Galambos:1970}
{\sc Galambos, J.} (1970). 
Distribution of arithmetical functions. A survey. 
{\em Ann. Inst. H. Poincar\'e Sect. B}, {\bf6}, 281--305.

\bibitem{Hilberdink+Toth:2016}
{\sc Hilberdink, T. and T\'{o}th, L.} (2016). 
On the average value of the least common multiple of $k$ positive integers.
 {\em J. Numb. Theory}, {\bf 169}, 327--341.
 
\bibitem{Hu:2013}
{\sc Hu, J.} (2013).
The probability that random positive integers are $k$-wise relatively prime.
 {\em Int. J. Number Theory}, {\bf 9}, 1263--1271.
 
\bibitem{Mehrdad+Zhu:2016}
{\sc Mehrdad, B. and Zhu, L.} (2016).
Limit theorems for empirical density of greatest common divisors.
{\em Math. Proc. Cambridge Philos. Soc.}, {\bf 161}, no. 3, 517--533.

\bibitem{Micheli+Schnyder:2016}
{\sc Micheli, G. and Schnyder, R.} (2016).
On the density of coprime $m$-tuples over holomorphy rings.
 {\em Int. J. Number Theory}, {\bf 12}, no. 3, 833--839.

\bibitem{Moree00}
{\sc Moree, P.} (2000).
Approximation of singular series and automata.
With an appendix by {\sc Niklasch, G.}
{\em Manuscripta Math.}, {\bf 101}, no. 3, 385--399.

\bibitem{Niklasch02}
{\sc Niklasch, G.} (2002).
Some number-theoretical constants arising as products of rational functions of $p$ over primes.
Preprint available at \url{https://oeis.org/A001692/a001692.html}

\bibitem{Nymann:1972}
{\sc Nymann, J. E.} (1972).
On the probability that $k$ positive integers are relatively prime.
 {\em J. Numb. Theory}, {\bf 4}, 469--473.

\bibitem{Tenenbaum:1995}
{\sc Tenenbaum, G.} (1995). 
{\em Introduction to analytic and probabilistic number theory.} 
Cambridge Studies in Advanced Mathematics, 46.
Cambridge University Press.

\bibitem{Toth:2016}
{\sc T\'{o}th, L.} (2016).
Counting $r$-tuples of positive integers with $k$-wise relatively prime components.
 {\em J. Numb. Theory}, {\bf 166}, 105--116.

\end{thebibliography}

\end{document}